\theoremstyle{definition}
\newtheorem{thm}{Theorem}[section]
\newtheorem*{thm*}{Theorem}
\newtheorem{cor}[thm]{Corollary}
\newtheorem{mydef}[thm]{Definition}
\newtheorem{lem}[thm]{Lemma}
\newtheorem{prop}[thm]{Proposition}
\newtheorem{example}[thm]{Example}
\newtheorem{caution}[thm]{Caution}
\newtheorem{remark}[thm]{Remark}
\newtheorem{notation}[thm]{Notation}
\DeclareMathOperator{\Spec}{Spec}
\DeclareMathOperator{\spec}{Spec}
\DeclareMathOperator{\height}{ht}
\DeclareMathOperator{\mub}{mub}
\DeclareMathOperator{\vp}{\varphi}
\DeclareMathOperator{\Str}{Str}
\DeclareMathOperator{\str}{Str}
\newcommand{\les}[1]{\le_{\str {#1}}}
\newcommand{\less}[1]{<_{\str {#1}}}
\newcommand{\LB}[1]{\mathbf{L}_{#1}}
\newcommand{\GB}[1]{\mathbf{G}_{#1}}
\newcommand{\Lower}[2]{\mathbf{L}_{#1}(#2)} 
\newcommand{\Greaters}[2]{\mathbf{G}^*_{#1}(#2)}
\newcommand{\mf}{\mathfrak}
\newcommand{\strX}[1]{(\str X)_{#1}}
\newcommand{\strY}[1]{(\str Y)_{#1}}
\newcommand{\strXF}{\str_{\text{F}} X}
\newcommand{\strYF}{\str_{\text{F}} Y}
\author{Cory H. Colbert}
\thanks{The author was partially supported by a Lenfest grant from Washington and Lee University.}
\email{ccolbert@wlu.edu}
\subjclass[2010]{Primary 13E05. Secondary 06A06.}
\keywords{Noetherian spectrum, partially ordered set, invariant, ascending chain condition, descending chain condition.}
\title{A Structural Invariant on Certain Two-Dimensional Noetherian Partially Ordered Sets} 
\date{}
\begin{document}

\maketitle

\begin{abstract}
If $(X, \le_X)$ is a partially ordered set satisfying certain necessary conditions for $X$ to be order-isomorphic to the spectrum of a Noetherian domain of dimension two, we describe a new poset $(\str X, \le_{\str X})$ that completely determines $X$ up to isomorphism. The order relation $\le_{\str X}$ imposed on $\str X$ is modeled after R. Wiegand's well-known ``P5" condition that can be used to determine when a given partially ordered set $(U, \le_U)$ of a certain type is order-isomorphic to $(\Spec \mathbb Z[x], \subseteq).$ 
\end{abstract}

\section{introduction}

In 1978, R. Wiegand proved in [\cite{ClassifyP5}, Theorem 1] that there is only one partially ordered set $(X, \le_X)$, up to isomorphism, that satisfies the following five properties: 

 \begin{enumerate}
\item[P1] $X$ is countable and has a unique minimal element.
\item[P2] $X$ has dimension 2.
\item[P3] For each element $x$ of height one there are infinitely many elements $y > x.$
\item[P4] For each pair $x, y$ of distinct elements of height one, there are only finitely many elements $z$ such that $z > x$ and $z > y.$
\item[P5] Given a finite set $S$ of height-one elements and a finite set $T$ of maximal elements, there is a height-one element $w$ such that (a) $w < t$ for each $t \in T,$ and (b) if $s \in S$ and $s < x$ and $w < x$ then $x \in T.$ 
\end{enumerate}

In particular, if $R$ and $S$ are commutative Noetherian rings, and both $(\spec R, \subseteq)$ and $(\spec S, \subseteq)$ satisfy Conditions P1, \ldots, P5, then $(\spec R, \subseteq)$ and $(\spec S, \subseteq)$ are isomorphic as partially ordered sets. Theorem 2 in \cite{RZX} asserts that if $k$ is a field, and $A$ is a two-dimensional domain that is finitely generated as a $k$-algebra, then $(\spec A, \subseteq)$ and $(\spec \mathbb Z[x], \subseteq)$ are isomorphic if and only if $k$ is contained in the algebraic closure of a finite field. In particular, $(\spec \mathbb F_p[x,y], \subseteq)$ and $(\spec \mathbb Z[x], \subseteq)$ are isomorphic for all prime numbers $p,$ but $(\spec \mathbb Q[x,y], \subseteq)$ is not isomorphic to $(\spec \mathbb Z[x], \subseteq)$ because $\mathbb Q$ is not contained in the algebraic closure of a finite field.

In the following years, work was done to understand what other Noetherian spectra satisfy (and do not satisfy) the five conditions, and since the spectrum of many commutative Noetherian rings already satisfies the first four conditions, the work is typically reduced to showing that a candidate spectrum satisfies Condition P5 in order to conclude that the spectrum is isomorphic, as a partially ordered set, to $(\spec \mathbb Z[x], \subseteq).$ It was conjectured in \cite{RZX} that if $A$ is a two-dimensional domain finitely generated as a $\mathbb Z$-algebra, then $(\spec A, \subseteq)$ is isomorphic to $(\spec \mathbb Z[x], \subseteq).$ Important advancements have been made in the direction of that conjecture, including the work of A. Saydam and S. Wiegand, who showed in [\cite{SaydamWiegand}, Theorem 1.2] that $(\spec D[x, g_1, \ldots, g_n], \subseteq)$ satisfies Condition P5, where $D$ is the order in an algebraic number field, $x$ is an indeterminate, and $g_1, \ldots, g_n$ are nonzero elements of the quotient field of $D[x].$ 

While it is known that if two partially ordered sets $(X, \le_X)$ and $(Y, \le_Y)$ satisfy all five conditions then they must be isomorphic, little can be said if they do \textit{not} satisfy P5; they could be isomorphic or they could be very different. For instance, while it is known that $(\spec \mathbb Q[x,y], \subseteq)$ does not satisfy Condition P5, there is no known variation of Condition P5 that would completely classify the spectrum of all Noetherian rings isomorphic to $(\spec \mathbb Q[x,y], \subseteq).$ This inspired us to attach to (most) partially ordered sets satisfying Conditions P1 through P4 a new partially ordered set $\str X$ whose order relation is inspired by Condition P5. The set $\str X$ largely consists of elements of the form $(S, T),$ where $S$ is a finite, nonempty subset of the height-one nodes of $X,$ and $T$ is a nonempty subset of the height-two nodes of $X.$ To get a sense of the order relation on $\str X,$ fix a poset $(X, \le_X)$ satisfying Conditions P1 through P4 (for instance, $X = \spec k[x,y]$), take a finite, nonempty, subset $S$ of height-one nodes of $X$ and a finite, nonempty, subset $T$ of height-two nodes of $X.$ If there is a height-one node $w \in X$ satisfying Condition P5 with respect to $S$ and $T,$ then we will say $(S, T) \less{X} (S\cup\{w\}, T).$ While the definition of $\le_{\str X}$ is more relaxed than requiring the existence of a \textit{single} element $w,$ we show, in our first main result, that $\str X$ has the same classifying effect that Condition P5 does in the sense that if $(\str X, \le_{\str X}) \cong (\str Y, \le_{\str Y}),$ then $(X, \le_X) \cong (Y, \le_Y).$ In the context of $(\spec \mathbb Q[x,y], \subseteq),$ this means that if $R$ is a commutative Noetherian domain such that $$(\str \spec R, \le_{\str \spec R}) \cong (\str \spec \mathbb Q[x,y], \le_{\str \spec \mathbb Q[x,y]})$$ then $$(\spec R, \subseteq) \cong (\spec \mathbb Q[x,y], \subseteq).$$

More precisely, if $(X, \le)$ is a poset that satisfies certain necessary conditions in order for there to exist an isomorphism from $(X, \le_X)$ onto $(\spec k[x,y], \subseteq)$ for some field $k,$ we call $(X, \le_X)$ a $J$-poset. In Section 6 (Theorem \ref{maintheorem}), we prove: 

\begin{thm*} If $X$ and $Y$ are $J$-posets, then $X \cong Y$ if and only if $\str X \cong \str Y.$ Specifically, if $\rho: X\to Y$ is an isomorphism, then the map $$\vp: \str X \to \str Y$$ given by $\vp(A, B) = (\rho(A), \rho(B))$ for all $(A, B) \in \str X$ is an isomorphism, and conversely, if $\vp: \str X \to \str Y$ is any isomorphism, then there is an isomorphism $\rho: X \to Y$ such that $$\vp(A, B) = (\rho(A), \rho(B))$$ for all $(A, B) \in \str X.$ \end{thm*}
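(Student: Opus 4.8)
The forward implication is routine and I would dispatch it first. A poset isomorphism $\rho : X \to Y$ preserves height, hence restricts to bijections between the height-one nodes and between the height-two nodes of the two posets; and since Condition P5, and likewise the relaxed relation generating $\les{X}$, is phrased entirely in terms of the order, $\rho$ carries the generating relation of $\les{X}$ to that of $\les{Y}$ and back. Consequently the componentwise map $\vp(A,B)=(\rho(A),\rho(B))$ is a well-defined bijection $\str X \to \str Y$ whose inverse is $(A',B')\mapsto(\rho^{-1}(A'),\rho^{-1}(B'))$ and which is order-preserving in both directions, i.e.\ an isomorphism. All the work is in the converse, and in showing that \emph{every} isomorphism $\vp$ must have this ``diagonal'' shape.

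For the converse, the plan is to give a recipe that reconstructs $(X,\le_X)$ from $(\str X,\les{X})$ using only order-theoretic data, and then to run the recipe through $\vp$. The first step is to single out, by an order-theoretic condition, the ``cells'' of $\str X$: I expect these to be exactly the minimal elements of $\str X$, and I expect each to be a pair whose first coordinate is a single height-one node $s$ and whose second coordinate is as small as possible for that $s$, in the cleanest case a single height-two node $t$ with $s \le_X t$. Here one uses that the generating relation of $\les{X}$ enlarges first coordinates by adjoining P5-witnesses while tightly controlling second coordinates, so that nothing lies strictly below such a pair, together with the $J$-poset axioms, which guarantee that every incident pair $s \le_X t$ actually produces a cell. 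The second step is to define, on the set of cells and straight from $\les{X}$, two equivalence relations: ``same height-one coordinate'' and ``same height-two coordinate''.

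The crux --- and the step I expect to be the main obstacle --- is the third: telling those two equivalence relations apart purely order-theoretically, so that the reconstruction knows which level is which. Here I would exploit the asymmetry between the two levels encoded in the $J$-poset axioms: two distinct height-one nodes lie below only finitely many common height-two nodes (the finiteness clause of P4), whereas two distinct height-two nodes lie above infinitely many common height-one nodes (mirroring, in the motivating example $X=\spec k[x,y]$, the fact that two distinct closed points of the affine plane lie on infinitely many common irreducible curves), and this discrepancy surfaces inside $\str X$ as a finiteness-versus-infiniteness statement about suitable sets of cells. With the two relations separated, one recovers the height-one nodes of $X$ as the classes of the first relation, the height-two nodes as the classes of the second, and the incidence $s \le_X t$ from the cells, reattaching the unique minimum of $X$ at the bottom; running the same recipe on the $Y$-side and matching the two through $\vp$ --- which, being an isomorphism, automatically respects ``minimal element'', each coordinate-equivalence, and the incidence data --- produces a poset isomorphism $\rho : X \to Y$.

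Finally I would check that $\vp(A,B)=(\rho(A),\rho(B))$ for every $(A,B)\in\str X$. For this I would show that a general element of $\str X$ is determined order-theoretically by the family of cells it dominates, namely $\{(\{s\},\{t\}) : s\in A,\ t\in B\}$ --- for instance as the join of that family, or as the unique element meeting a suitable minimality condition relative to it. Since $\vp$ is an order isomorphism it carries this family to the corresponding family of cells on the $Y$-side, which by construction of $\rho$ is $\{(\{\rho(s)\},\{\rho(t)\}) : s\in A,\ t\in B\}$, and therefore carries $(A,B)$ to the element determined by that family, namely $(\rho(A),\rho(B))$. The calculations I am suppressing are the verifications that the order-theoretic conditions above really are expressible in $(\str X,\les{X})$ and the bookkeeping for any elements of $\str X$ that are not cells of the above form; the genuinely delicate point, as noted, is the order-theoretic separation of the two coordinate-equivalences, since it is exactly what forces $\rho$ to preserve height rather than merely permute $\str X$ in some unrelated way.
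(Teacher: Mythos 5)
Your forward direction is fine and matches the paper's Proposition \ref{invariant}. For the converse, your high-level plan --- reconstruct $X$ order-theoretically from $\str X$, build $\rho$ level by level, then verify that $\vp$ is diagonal --- is the paper's plan in spirit, but the concrete recipe breaks at each of the three places you flag as needing verification. First, the ``cells'' $(\{s\},\{t\})$ are not ``exactly the minimal elements'' of $\str X$: by Lemma \ref{AB}(iii), $(A,B)$ has positive height if and only if $B=\mub_X K$ for some $K\subseteq A$, so the minimal elements also include every $(x,\Greaters{X}{x})$ and every finite pair $(A,B)$, with $|A|$ and $|B|$ arbitrary, whose second coordinate fails to be a minimal-upper-bound set of part of its first coordinate; you give no order-theoretic way to isolate the pairs $(\{s\},\{t\})$ inside this much larger class. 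Second, your last step fails because the order on $\str X$ \emph{shrinks} second coordinates going up (Condition E1 requires $B\supseteq D$): an element $(A,B)$ with $|B|\ge 2$ dominates no cell $(\{s\},\{t\})$ whatsoever, since that would force $B\subseteq\{t\}$, and even when $B=\{t\}$ the dominated cells are only those $(\{s\},\{t\})$ admitting a witness $W\subseteq A$ with $\mub_X(W\cup\{s\})=\{t\}$, not all $s\in A$. So a general element is neither the join of, nor determined by, the family $\{(\{s\},\{t\}):s\in A,\ t\in B\}$. Third, the step you yourself call the crux --- telling ``same first coordinate'' from ``same second coordinate'' --- is left as an expectation rather than an argument.

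It is worth contrasting how the paper actually handles these points. The fibers over a fixed height-two node are detected not by a finiteness asymmetry but by directedness: any two elements of $(\str X)_b$ have a common upper bound inside $(\str X)_b$ (Lemma \ref{AB}(ii)), while elements over distinct height-two nodes have no common upper bound because an upper bound's second coordinate would have to be a nonempty subset of the intersection of two disjoint singletons; this yields $\rho_2$ (Lemmas \ref{L1} and \ref{L2}). The height-one level is recovered from the minimal elements $(x,\Greaters{X}{x})$, whose images are forced to have the same shape because they lie below elements of infinitely many fibers, making their second coordinates infinite (Lemma \ref{L3}); this yields $\rho_1$. Finally, pinning down the first coordinate of $\vp(A,B)$ --- the analogue of your last step --- occupies most of Section 6 and rests on the counting identity $|\LB{B}(A,B)|=(2^{\ell(A,B)}-1)2^{\eta(A,B)}$ combined with the Fundamental Theorem of Arithmetic (Lemmas \ref{ABB} and \ref{FirstStep}), followed by the bootstrapping of Corollary \ref{SecondStep} and Theorem \ref{rhoAB}. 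None of that machinery is replaced by anything in your sketch, so the proposal as written does not yet constitute a proof.
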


In Sections 2 and 3, we set up basic definitions and notation for the objects of study. In Section 4, we define $\str X$ and we provide some basic examples. In Section 5, we begin a deeper study of $\str X,$ and we show $\str X$ is an invariant (i.e., $X \cong Y \implies \str X \cong \str Y;$ see Proposition \ref{invariant}). Section 6 is entirely devoted to proving the second part of the main theorem, the proof of which is broken into two major components: constructing the map $\rho: X \to Y$ and showing $\rho$ is an isomorphism (see Theorem \ref{rho}), and then showing $\vp(A, B) = (\rho(A), \rho(B))$ (Theorem \ref{rhoAB}). Lastly, we study a subset $\strXF$ of $\str X,$ which consists only of elements $(A, B) \in \str X$ such that \textit{both} $A$ and $B$ are finite, and we prove, in Theorem \ref{finiterhoAB}, that if $\strXF$ is endowed with the same order relations as $\str X,$ then every appearance of $\str X$ and $\str Y$ in Theorem \ref{maintheorem} can be replaced with $\strXF$ and $\strYF,$ respectively.

\section{Preliminaries}

In this section, we introduce basic notation and definitions that will be used extensively throughout the paper.

Recall that a partially-ordered set $(X, \le_X)$ is a pair of a set $X$ with a binary relation $\le_X$ that is reflexive, antisymmetric, and transitive. Specifically, this means that for all $x, y, z\in X:$ $x \le_X x$ (reflexivity); $x \le_X y$ and $y \le_X x \implies x =y$ (antisymmetry); and $x \le_X y$ and $y \le _X z \implies  x \le_X z$ (transitivity). We will typically refer to a partially ordered set as a ``poset."


\begin{notation} Let $X$ be a poset, and let $A \subseteq X.$
\begin{enumerate} 
\item If $B \subseteq	X,$ we say $B \le_X A$ if and only if for all $b \in B,$ and for all $a \in A,$ we have $b \le_X a.$ If $B = \{b\}$ is a singleton, and $B \le_X A,$ we will write $b \le_X A$ instead of $\{b\} \le_X A.$ A similar convention will apply if $A$ is a singleton. The notations $B <_X A, A \le_X B, A <_X B$ are defined similarly.
\item Define the following quantities: 
\begin{eqnarray*}
\GB{X}(A) &:=& \{x \in X: A \le_X x\} \text{ and } \GB{X}^*(A) := \GB{X}(A) \setminus A.\\
\LB{X}(A) &:=& \{x \in X: x \le_X A\} \text{ and } \LB{X}^*(A) := \Lower{X}{A}\setminus A.\\
\end{eqnarray*}
\item The ``minimal upper bound set" of $A$ is defined to be $$\mub_X A:= \min \GB{X}{A}.$$ 
\end{enumerate}
\end{notation}

\begin{mydef} Let $(X, \le_X), (Y, \le_Y)$ be posets, and let $f: X \to Y$ be a function of sets. We say $f: (X, \le_X) \to (Y, \le_Y)$ is a \textit{poset map} if and only if for all $x, x' \in X$ we have $x \le_X x' \implies f(x) \le_Y f(x').$ If $f: (X, \le_X) \to (Y, \le_Y)$ is a poset map, we say $f$ is an \textit{order-embedding} if and only if for all $x, x' \in X$ we have $f(x) \le_Y f(x') \implies x \le_X x'.$ Lastly, we say a poset map $f$ is an \textit{isomorphism of posets} (or simply isomorphism) if  $f$ is a surjective, order-embedding from $X$ onto $Y.$ If there is an isomorphism from $(X, \le_X)$ onto $(Y, \le_Y),$ we write $(X, \le_X) \cong (Y, \le_Y)$ or simply $X \cong Y$ if the order relations are clear.  \end{mydef} 

\begin{remark}\label{isoinj} If $f$ is an order-embedding and $f(x) = f(x'),$ then $f(x) \le_Y f(x')$ and $f(x') \le_Y f(x),$ so $x \le_X x'$ and $x' \le_X x,$ thus $x = x'.$ In particular, every order-embedding is necessarily injective on the level of sets. \end{remark} 

\begin{mydef} If $X$ is a poset, we define the \textit{dimension} of $X,$ denoted $\dim X,$ to be $$\sup\{n \in \mathbb Z:\text{There are $x_0, \ldots, x_n \in X$ such that } x_0 <_X x_1 <_X \ldots <_X x_n\}.$$ If there is no bound on the length of chains of nodes in $X,$ we define $\dim X = \infty.$  If $x \in X,$ we define the \textit{height} of $x$ in $X$ to be $\height_X x:=\dim \Lower{X}{x}.$ For each integer $i \ge 0,$ we define $X_i \subset X$ to be the nodes of height $i$ in $X.$  \end{mydef} 





 \section{$J$-Posets}

 \begin{mydef} Let $X$ be a countable partially ordered set with a single minimal node. We say $X$ is a $J$-\textit{poset} if and only if the following conditions are met: 
 \begin{enumerate}
 \item[J1] $\dim X = 2$ and $\mub_X A$ is finite for all nonempty $A \subseteq X.$ 
 \item[J2] If $x \in X_1,$ there are infinitely many $z \in X$ such that $z >_X x.$
 \item[J3] If $m \in X_2,$ and $F \subset X_1$ is finite, then there is a finite set $K \subset X_1$ disjoint from $F$ such that $\mub_X K =\{m\}.$ 
 \item[J4] If $T \subset X_2$ is a finite set, then there is $t \in X_1$ such that $t <_X T.$ 
 \end{enumerate}
 \end{mydef}
 
 \begin{prop}\label{specialt} Let $X$ be a $J$-poset. If $S \subset X_1, T \subset X_2$ are nonempty, finite sets, then there is $t \in X_1 \setminus S$ such that $t <_X T.$ In particular, there are infinitely many $w \in X_1$ such that $w <_X T.$ \end{prop}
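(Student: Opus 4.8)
The plan is to promote Axiom~J4 --- which merely supplies \emph{some} height-one node below $T$ --- to a version that supplies such a node \emph{outside} $S$, by enlarging $T$ with one carefully chosen maximal node that lies above no element of $S$. The argument then reduces to manufacturing that maximal node, and for that I would first record two facts.

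First, if $a,b \in X_1$ are distinct, then any common upper bound of $\{a,b\}$ must have height two (height zero or one would force $a = b$ by a chain-length count), hence is maximal; thus $\GB{X}(\{a,b\})$ is an antichain, so it equals $\mub_X\{a,b\}$, which is finite by J1. Equivalently, two distinct height-one nodes have only finitely many common strict upper bounds. Second --- the substantive fact --- for every finite $G \subseteq X_1$ we have $X_2 \neq \bigcup_{g \in G}\GB{X}^*(g)$. To see this, suppose not, fix any $m_0 \in X_2$ (nonempty since $\dim X = 2$), and apply J3 to $m_0$ with $F = G$ to obtain a finite nonempty $K \subseteq X_1$, disjoint from $G$, with $\mub_X K = \{m_0\}$ (nonempty because $\mub_X\emptyset$ consists of the least node of $X$, not $m_0$). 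Pick $k \in K$. By J2 the set $\GB{X}^*(k)$ is infinite, and --- since $k$ has height one --- every member of it has height two, hence, by our supposition, lies in $\bigcup_{g \in G}\GB{X}^*(g)$. As $G$ is finite, a pigeonhole argument yields a single $g_0 \in G$ with $\GB{X}^*(k) \cap \GB{X}^*(g_0)$ infinite; but $g_0 \neq k$ because $k \notin G$, contradicting the first fact.

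Granting both facts, the proposition is short. Given finite $S \subseteq X_1$ and finite nonempty $T \subseteq X_2$, use the second fact with $G = S$ to choose a maximal node $m^* \in X_2 \setminus \bigcup_{s \in S}\GB{X}^*(s)$, and then apply J4 to the finite set $T \cup \{m^*\} \subseteq X_2$ to obtain $t \in X_1$ with $t <_X T \cup \{m^*\}$. Then $t <_X T$ and $m^* \in \GB{X}^*(t)$; since $m^* \notin \GB{X}^*(s)$ for every $s \in S$, it follows that $t \neq s$ for all $s \in S$, i.e.\ $t \in X_1 \setminus S$, which is the first assertion. The ``in particular'' clause is then immediate: if $\{w \in X_1 : w <_X T\}$ were finite, applying what we just proved with $S$ taken to be that very set would contradict it, so the set is infinite.

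The only genuine obstacle is the second fact, that the strict up-sets of finitely many height-one nodes cannot exhaust $X_2$; that is where J1, J2 and J3 must be used together --- J3 to build a height-one node $k$ lying off $G$, J2 to force $k$ below infinitely many maximal nodes, and J1 (through the first fact) to bound how many of those any single $g \in G$ can absorb --- and it is the step I would expect to require the most care in a fully detailed proof.
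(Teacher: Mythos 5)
Your proposal is correct and follows essentially the same route as the paper's proof: both hinge on showing that the strict up-sets of finitely many height-one nodes cannot cover $X_2$ (using J1 to bound common upper bounds of two distinct height-one nodes and J2 to produce infinitely many maximal nodes above an auxiliary height-one node outside $S$), and both then adjoin one such uncovered maximal node to $T$ before invoking J4. The only differences are organizational --- you phrase the covering fact as a contradiction-plus-pigeonhole argument and obtain the auxiliary node via J3, where the paper argues directly from the infinitude of $X_1$ --- and these do not change the substance.
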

 
 \begin{proof} We first claim that if $F$ is a nonempty, finite set, then $X_2 \ne \cup_{f \in F} \Greaters{X}{f}.$ Since $X_1$ is infinite, there is $u \in X_1\setminus F.$ By Condition J2, $\Greaters{X}{u}$ is infinite, and by Condition J1, the set $$\cup_{f\in F} \mub_X\{u, f\} \subset X_2$$ is finite, so there is $$m \in \Greaters{X}{u}\setminus \cup_{f \in F} \mub_X \{u, f\}.$$ In particular, $m \in X_2,$ and $m\notin \cup_{f \in F} \Greaters{X}{f},$ because otherwise we would have $m \in \mub_X \{u, f\}$ for some $f \in F.$ 
 
 Let $S, T$ be as in the statement of the proposition. By the work in the previous paragraph, there is $v \in X_2 \setminus \cup_{s \in S} \Greaters{X}{s}.$ Set $T':=T\cup\{v\}.$ By Condition J4, there is $t <_X T'.$ So $t <_X T,$ and $t \notin S$ because $t <_X v$ and no element of $S$ is less than $v$ in $X.$  \end{proof}

 Items J2 and J4 may at first appear rather restrictive, but many Noetherian spectra satisfy both of those properties. For instance, if $k$ is any field, then the following result of S. McAdam in \cite{MCADAM} implies that $(\spec k[x,y], \subseteq)$ satisfies J4, so is a $J$-poset: 
 
 \begin{thm}\label{McAdam}[\cite{MCADAM}, Corollary 11] Let $R$ be a Noetherian domain, and let $x, y$ be indeterminates over $R.$ If $M_1, \ldots, M_m$ are height 2 primes in $R[x,y],$ then there are infinitely many primes of $R$ contained in $M_1 \cap \cdots \cap M_m.$ \end{thm}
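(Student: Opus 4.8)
The plan is to pass to the contracted primes and reduce the whole problem to a statement inside $\Spec R$. Since $R\subseteq R[x,y]$, a prime $\mathfrak q$ of $R$ is contained in $M_1\cap\cdots\cap M_m$ exactly when $\mathfrak q\subseteq M_i\cap R$ for every $i$; writing $\mathfrak p_i:=M_i\cap R$, it therefore suffices to exhibit infinitely many primes of $R$ contained in $\mathfrak p_1\cap\cdots\cap\mathfrak p_m$.

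To locate the $\mathfrak p_i$ I would use standard facts about the polynomial extension $R\hookrightarrow R[x,y]$ (which is faithfully flat with regular fibres): namely $\height(\mathfrak p_iR[x,y])=\height\mathfrak p_i$, and $\height M_i=\height\mathfrak p_i+\height\bigl(M_i/\mathfrak p_iR[x,y]\bigr)$ with the last summand equal to the height of the image of $M_i$ in $\kappa(\mathfrak p_i)[x,y]$, hence at most $2$. Comparing with $\height M_i=2$ shows it is enough to treat the case in which every $\mathfrak p_i$ has height $2$, i.e.\ each localization $R_{\mathfrak p_i}$ is a two-dimensional Noetherian local domain. The problem has now become a statement purely about $R$: given finitely many height-two primes $\mathfrak p_1,\dots,\mathfrak p_m$ of a Noetherian domain $R$, find infinitely many primes of $R$ contained in all of them.

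For this I would show one can always manufacture one more such prime. Suppose $\mathfrak q_1,\dots,\mathfrak q_n$ ($n\ge 0$) are height-one primes already known to lie inside $\mathfrak p_1\cap\cdots\cap\mathfrak p_m$. Since $R$ is a domain, $\mathfrak p_1\cap\cdots\cap\mathfrak p_m\supseteq\mathfrak p_1\cdots\mathfrak p_m\ne 0$, and each $\mathfrak p_i$ (having height $2$) is contained in no $\mathfrak q_j$, so prime avoidance yields a nonzero $a\in\mathfrak p_1\cap\cdots\cap\mathfrak p_m$ with $a\notin\mathfrak q_1\cup\cdots\cup\mathfrak q_n$, and by Krull's principal-ideal theorem every minimal prime of $(a)$ is a height-one prime distinct from all the $\mathfrak q_j$. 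What this does not settle by itself — and the step I expect to be the real obstacle — is choosing $a$ so that one of the minimal primes of $(a)$ is contained in \emph{all} of the $\mathfrak p_i$ simultaneously, rather than having these minimal primes split up among them. I would try to secure this by localizing at the semilocal Noetherian domain $A:=U^{-1}R$, $U:=R\setminus(\mathfrak p_1\cup\cdots\cup\mathfrak p_m)$, in which the $\mathfrak p_i$ are the maximal ideals, reducing the question to whether the Jacobson radical of such an $A$ contains height-one primes; this one attacks by a more careful prime-avoidance argument inside $A$, or by going back into $R[x,y]$ and cutting $M_1\cap\cdots\cap M_m$ down by a suitably generic principal prime, the spare indeterminates being precisely what allows one to pass a single ``curve'' through the finitely many ``points.'' Once one such prime is in hand, iterating the mechanism of this paragraph produces infinitely many primes of $R$ inside $\mathfrak p_1\cap\cdots\cap\mathfrak p_m$, and by the reduction above these are exactly the primes of $R$ contained in $M_1\cap\cdots\cap M_m$.
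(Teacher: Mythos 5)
The paper offers no proof of this statement --- it is quoted from McAdam --- so there is nothing internal to compare against; judged on its own, your argument has a genuine gap, and it begins with the reading of the statement. Taken literally, ``infinitely many primes of $R$ contained in $M_1\cap\cdots\cap M_m$'' is false: take $R=k$ a field and $M_1=(x,y)\subset k[x,y]$, so that the only prime of $R$ below $M_1$ is $(0)$. The statement is a misprint for what McAdam's Corollary 11 actually says and what the application to Condition J4 requires, namely infinitely many \emph{height-one primes of $R[x,y]$} contained in $M_1\cap\cdots\cap M_m$. Your opening move --- contracting to $\mathfrak p_i:=M_i\cap R$ and relocating the whole problem inside $\Spec R$ --- therefore discards exactly the objects the theorem is about. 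The reduction is also unjustified on its own terms: the fiber-dimension formula gives $\height M_i=\height\mathfrak p_i+h_i$ with $h_i\in\{0,1,2\}$, so $\height\mathfrak p_i$ may well be $0$ or $1$, and in those cases there are only finitely many primes of $R$ below $\mathfrak p_i$ (just $(0)$, in the example above). Nothing allows you to ``treat only the case $\height\mathfrak p_i=2$''; those are precisely the cases in which the literal statement fails.

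Even granting the reduction, the step you candidly flag as the obstacle is the entire theorem. The mechanism you describe --- prime avoidance to choose $a\in\mathfrak p_1\cap\cdots\cap\mathfrak p_m$ off the finitely many known height-one primes, then Krull's principal ideal theorem --- produces new height-one primes each lying under \emph{some} $\mathfrak p_i$, but provides no control forcing a single minimal prime of $(a)$ under \emph{all} of them, and no amount of prime avoidance inside the semilocal ring $A$ will supply it: the assertion ``finitely many height-two primes of a Noetherian domain always have infinitely many common height-one lower bounds'' is exactly the kind of statement this paper treats as a hypothesis (J4) to be verified rather than a general fact, and it is not an elementary consequence of avoidance plus the principal ideal theorem. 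McAdam's proof works in $R[x,y]$ and uses the two indeterminates essentially to thread a single ``curve'' through the finitely many height-two primes; your closing parenthetical about the spare indeterminates is the right instinct, but it is the whole proof rather than a finishing touch, and it is no longer available once everything has been contracted down to $R$.
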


 Condition J3 is satisfied by many Noetherian rings as well, as the next proposition shows.
 
 \begin{prop}If $R$ is a Noetherian ring and $P$ is a prime ideal of $R$ such that $\height P \ge 2,$ and $F$ is a finite set of height-one prime ideals of $R,$ then there is a finite set $K$ of prime ideals such that $K$ is disjoint from $F,$ and $\mub_{\spec R} K = \{P\}.$ \end{prop}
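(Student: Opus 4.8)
The plan is to construct $K$ as a finite set of height-one primes (matching the shape required by Condition J3, where $K \subset X_1$), each contained in $P$ and none of them lying in $F$, such that $\sum_{Q \in K} Q$ has radical exactly $P$. Then $P$ is the unique minimal prime over $\sum_{Q \in K} Q$, which is precisely the assertion $\mub_{\Spec R} K = \{P\}$, and $K$ is disjoint from $F$ by construction. Note that a naive localization at $P$ would only see the primes of $R$ contained in $P$, so we must argue inside $R$ throughout.

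The technical core is the claim: \emph{for every prime $Q^\ast$ of $R$ with $P \not\subseteq Q^\ast$, there is a height-one prime $Q$ with $Q \subseteq P$, $Q \notin F$, and $Q \not\subseteq Q^\ast$.} To see this, note that $\height P \ge 2$ forces $P$ to be contained in no minimal prime of $R$ and in no member of $F$, while $P \not\subseteq Q^\ast$ by hypothesis; so prime avoidance yields $a \in P$ lying outside $Q^\ast$, outside every minimal prime of $R$, and outside every prime of $F$. By Krull's principal ideal theorem every minimal prime over $(a)$ has height at most one, and each such prime contains $a$ and so is not a minimal prime of $R$; hence each has height exactly one. Since $(a) \subseteq P$, choose a minimal prime $Q$ over $(a)$ with $Q \subseteq P$: then $Q$ is a height-one prime inside $P$, it avoids $F$ because $a \in Q$ while $a$ lies in no prime of $F$, and $Q \not\subseteq Q^\ast$ because $a \in Q \setminus Q^\ast$.

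Granting the claim, I would let $J$ be the (in general infinite) sum of all height-one primes of $R$ that are contained in $P$ and not in $F$, so $J \subseteq P$, and prove $\sqrt J = P$. If not, pick a minimal prime $Q^\ast$ over $J$ with $Q^\ast \ne P$; since $J \subseteq P$, were $P \subseteq Q^\ast$ we would have $J \subseteq P \subsetneq Q^\ast$, contradicting minimality, so $P \not\subseteq Q^\ast$, and the claim supplies a height-one prime $Q \subseteq P$, $Q \notin F$, with $Q \not\subseteq Q^\ast$ --- impossible, as $Q$ is one of the summands of $J$ and hence $Q \subseteq J \subseteq Q^\ast$. Thus $\sqrt J = P$. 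Since $P$ is finitely generated and each generator lies in $\sqrt J$, some power $P^N$ lies in $J$; expressing a finite generating set of $P^N$ through finitely many of the summand primes of $J$ produces a finite set $K$ of height-one primes, all contained in $P$ and none in $F$, with $P^N \subseteq \sum_{Q \in K} Q \subseteq P$. Taking radicals gives $\sqrt{\sum_{Q \in K} Q} = P$, i.e.\ $\mub_{\Spec R} K = \{P\}$; and $K \ne \emptyset$ since $\height P \ge 1$ keeps $P$ out of the nilradical, so $P^N \ne 0$.

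I expect the main obstacle to be the claim itself, and in particular the need to treat primes $Q^\ast$ that are merely \emph{incomparable} with $P$ (not below it) on the same footing: this is exactly what a localization-at-$P$ argument fails to handle, and choosing a single element $a$ avoiding $Q^\ast$, the minimal primes of $R$, and $F$ simultaneously is what makes the uniform statement go through.
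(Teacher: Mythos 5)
Your argument is correct, and it reaches the conclusion by a genuinely different route from the paper. The paper first passes to $R/\sqrt{0}$ and then runs a Noetherian maximality argument on the family $\mathcal M$ of \emph{finite} sums of height-one primes contained in $P$ and outside $F$: it shows a maximal element $J=\mathfrak q_1+\cdots+\mathfrak q_j$ of $\mathcal M$ is literally equal to $P$ as an ideal, the delicate step being the perturbation $x\mapsto x+f_1\cdots f_r y$ needed to push an element of $P\setminus J$ out of $\bigcup_{\mathfrak f\in F}\mathfrak f$ so that the principal ideal theorem can be applied to it. You instead take the full (possibly infinite) sum $J$ of all qualifying height-one primes, prove only that $\sqrt J=P$ by playing prime avoidance against each minimal prime $Q^\ast\ne P$ of $J$ (your key claim, which is proved correctly: the element $a$ must avoid $Q^\ast$, the minimal primes of $R$, and the members of $F$ simultaneously, and all of these are legitimate targets for avoidance since none contains $P$), and then use Noetherianity only at the end, via $P^N\subseteq J$ and finite generation of $P^N$, to extract the finite subfamily $K$. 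What each approach buys: the paper's yields the stronger fact that $P$ is an honest finite sum of such height-one primes (after reducing), whereas yours controls only the radical --- but that is exactly what $\mub_{\Spec R}K=\{P\}$ requires --- and in exchange you avoid both the reduction to the reduced case and the regular-element/perturbation bookkeeping. Your closing remarks ($K\ne\emptyset$ because $P\not\subseteq\sqrt 0$, and $K$ disjoint from $F$ by construction) tie up the remaining loose ends.
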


\begin{proof} Since $(\spec R, \subseteq) \cong (\spec R/\sqrt{0}, \subseteq),$ we may assume $R$ is reduced.  Let $\mathcal M$ be the set of all finite sums of height-one prime ideals contained in $P$ but not an element of $F.$ Since $\height P \ge 2$ and $R$ is Noetherian and reduced, there is a regular element $p \in P\setminus \cup_{\mf f \in F} \mf f,$ and thus a height-one prime ideal $\mathfrak p$ with $p \in \mathfrak p \subset P$ by the principal ideal theorem. Since $\mathfrak p \notin F,$ we have $\mathcal M \ne \emptyset.$ Since $R$ is Noetherian, $\mathcal M$ has a maximal element $J = \mathfrak q_1 + \ldots + \mathfrak q_j.$ 

We claim $J = P.$ If $J \ne P,$ then there is $x \in P \setminus J.$ If $x \notin \cup_{\mf f \in F} \mf f,$ then $x$ is an element of a height-one prime ideal $\mathfrak q$ contained in $P$ outside $F,$ so $\mathfrak q + J \in \mathcal M$ and therefore $J = \mathfrak q + J$ by maximality, a contradiction. Therefore, $x \in \cup_{\mf f \in F} \mf f.$ Let $y \in \mathfrak q_1\setminus \cup_{\mf f \in F} \mf f.$ Enumerate the prime ideals in $F$ as $\mf f_1, \ldots, \mf f_k, \mf f_{k+1}, \ldots, \mf f_r$ so that $x \in \cap_{i=1}^k \mf f_i$ and $x \notin \cup_{i=k+1}^r \mf f_i.$ For each $1 \le i \le k,$ let $f_i = 1.$ If $k+1 \le i \le r,$ then choose a regular element $f_i \in \mf f_i$ and not in each $\mf f_s$ such that $s \ne i.$ Then $x + f_1 \cdots f_ry \in P \setminus J,$ and if $x + f_1 \cdots f_ry \in \mf f_i$ for some $1 \le i \le k,$ then one of $f_1, \ldots, f_r, y$ must reside in $\mf f_i,$ an impossibility. If $x + f_1 \cdots f_ry \in \mf f_i$ for $k+1 \le i \le r,$ then $f_i \in \mf f_i$ implies $x \in \mf f_i,$ which is contrary to how we ordered the prime ideals in $F.$ So $x + f_1 \cdots f_ry$ is in a height-one prime ideal $\mathfrak q'$ not in $F,$ and therefore $\mathfrak q' + J \in \mathcal M$ so that $\mathfrak q' + J = J$ by maximality, a contradiction. Therefore, $J = P,$ and if $K = \{\mathfrak q_1, \ldots, \mathfrak q_j\},$ the result is proved.\end{proof} 
 
 \section{The Structure Poset}

Throughout the rest of the paper, $X$ and $Y$ will be $J$-posets. If $Z$ is any set, we will use $\mathscr P(Z)$ to denote the power set of $Z.$ 

\begin{mydef} Define $\str X \subset \mathscr P(X_1) \times \mathscr P(X_2)$ to be the set of all pairs $(A, B)$ of a set $A \subset X_1$ with a set $B \subset X_2$ satisfying the following properties:

\begin{enumerate}
\item $A$ and $B$ are both finite and nonempty, or $(A, B) = (x, \Greaters{X}{x})$ for some $x \in X_1,$ and
\item there is $a \in A$ such that $a <_X B.$
\end{enumerate}
\end{mydef}

\begin{mydef} Let $(A, B), (C, D) \in \str X.$ We say $(C, D)$ dominates $(A, B)$ via set $W$ if and only if the following conditions are met:

\begin{enumerate}
\item[E1] $A \subsetneq C$ and $B \supseteq D.$
\item[E2] $W \subseteq C$ is nonempty and $W <_X D.$
\item[E3] If $a \in A$ and $a <_X m$ and $W <_X m,$ then $m \in D.$ 
\end{enumerate}
\end{mydef} 

If $(A, B) \in \str X,$ and $A = \{a\}$ is a singleton, then we will write $(a, B)$ instead of $(\{a\}, B).$ A similar convention holds if $B = \{b\}$ is a singleton.

\begin{mydef}\label{str} Define the following relation $\les{X}$ on $\str X:$ Declare $(A, B) \les{X} (C, D)$ if and only if either $A = C$ and $B = D$ or $(C, D)$ dominates $(A, B)$ via $W$ for some $W \subseteq C.$ \end{mydef}

\begin{prop} $(\str X, \les{X})$ is a poset. \end{prop}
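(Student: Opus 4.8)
We must verify reflexivity, antisymmetry, and transitivity of $\les{X}$ on $\str X$. Reflexivity is immediate from the first clause of Definition \ref{str}. For antisymmetry, suppose $(A,B) \les{X} (C,D)$ and $(C,D) \les{X} (A,B)$ with $(A,B) \ne (C,D)$; then at least one of the two relations must come from a domination, but condition E1 forces $A \subsetneq C$ (a strict containment). If only one domination is in play, the other relation is the equality clause, which contradicts $A \subsetneq C$; if both are dominations, we get $A \subsetneq C$ and $C \subsetneq A$ simultaneously, impossible. So $(A,B) = (C,D)$.

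The substance of the proof is transitivity: assume $(A,B)\les{X}(C,D)$ and $(C,D)\les{X}(E,F)$; we must produce $(A,B)\les{X}(E,F)$. If either relation is an equality, the conclusion is immediate, so assume $(E,F)$ dominates $(C,D)$ via some $W'\subseteq E$ and $(C,D)$ dominates $(A,B)$ via some $W\subseteq C$. From E1 applied twice we get $A\subsetneq C\subsetneq E$, hence $A\subsetneq E$, and $B\supseteq D\supseteq F$, hence $B\supseteq F$; this gives E1 for the composite. The natural candidate for the witnessing set is $W' \cup W$ (note $W \subseteq C \subseteq E$ and $W' \subseteq E$, so $W'\cup W\subseteq E$). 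For E2 of the composite we need $W'\cup W <_X F$: we have $W' <_X F$ directly, and $W <_X D$; to conclude $W <_X F$ we must know that each element of $W$ lies below every element of $F$. Here is where E3 of the \emph{first} domination does the work — combined with the fact that $F \subseteq D$ and that elements of $F$ sit above the relevant nodes. Finally, for E3 of the composite: suppose $a\in A$, $a<_X m$, and $W'\cup W <_X m$; we must show $m\in F$. Since $W<_X m$ and $a<_X m$ and $a\in A$, domination of $(A,B)$ by $(C,D)$ via $W$ gives $m\in D$. Now we want to feed this into the second domination: we would like some element of $C$ that lies below $m$, together with $W' <_X m$, to force $m\in F$ by E3 of the second domination. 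The element of $C$ we use should be an element of $W$ (which lies in $C$ by E2 of the first domination and lies below $m$ by assumption); so with "$c \in C$, $c <_X m$, $W' <_X m$" we conclude $m \in F$ as desired.

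The step I expect to be the main obstacle is verifying E2 for the composite, i.e. that $W <_X F$: this is not formally immediate because $W$ was only guaranteed to lie below $D$, and $F$ could a priori be a subset of $D$ consisting of "new" nodes not reached by $W$. The resolution should be that E3 of the first domination, applied with a witness from $W$ itself (using that $W \subseteq C$ and, after intersecting with $A$ or choosing an appropriate element, that some element of $A$ lies below the nodes of $F$ because $F \subseteq D$ and $(A,B)\in\str X$ forces compatibility), pins every element of $F$ down to lie above an element of $W$. I would handle this carefully, possibly first establishing the small lemma that if $(C,D)$ dominates $(A,B)$ via $W$ then $W <_X D$ forces $W <_X B'$ for every $B' \subseteq D$ with $B' \subseteq B$, which is trivial, and that the genuine content is matching the "old" witness $W$ against the "new" target $F$. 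Once E2 is secured, E3 and the remaining axioms follow by the chaining described above, and I would close by remarking that the equality-clause cases were already dispatched at the start.
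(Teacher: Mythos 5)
Your argument is correct, but it is more complicated than it needs to be, and the step you single out as ``the main obstacle'' is not an obstacle at all. For E2 of the composite you need $W <_X F$; this is immediate from E1 of the second domination, which gives $F \subseteq D$: since $W <_X D$ means every element of $W$ lies below every element of $D$, it lies below every element of the subset $F$ as well. There are no ``new'' nodes in $F$ unreached by $W$, and no appeal to E3 is required --- you half-notice this when you call the relevant observation trivial, but then you walk it back and reintroduce E3 into the ``resolution,'' which muddies an otherwise sound verification. Your E3 check for the composite is fine: any element of the nonempty set $W \subseteq C$ serves as the required element of $C$ below $m$ (alternatively, $a$ itself works, since $A \subsetneq C$), and the intermediate deduction $m \in D$ is never actually used. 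The paper takes a cleaner route: it discards the first witness entirely and shows that the witness of the \emph{second} domination already witnesses domination of $(A,B)$ by $(E,F)$. With that choice E1 follows by chaining the inclusions, E2 is literally the E2 of the second domination (nothing from the first domination ever has to be compared with $F$), and E3 follows in one line from $A \subseteq C$. Your union $W \cup W'$ buys nothing over $W'$ alone; the extra bookkeeping it forces is exactly the $W <_X F$ check that caused you to hesitate.
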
 

\begin{proof} Reflexivity and antisymmetry are clear, so we need only show that if $(A_1, B_1) \less{X} (A_2, B_2)$ and $(A_2, B_2) \less{X} (A_3, B_3),$ then $(A_1, B_1) \less{X} (A_3, B_3).$ If $(A_3, B_3)$ dominates $(A_2, B_2)$ via $W,$ we claim it also dominates $(A_1, B_1)$ via $W.$ Indeed, $A_1 \subsetneq A_3$ and $B_1 \supseteq B_3,$ and if $a \in A_1$ and $a <_X m$ and $W <_X m,$ then $a \in A_2$ and $a <_X m$ and $W <_X m,$ so $m \in B_3.$ \end{proof}

We will say $(C, D)$ dominates $(A, B)$ via $W$ and $(A, B) \les{X} (C, D)$ via $W$ interchangeably. 

\begin{example} Suppose $a, b, c \in X_1, d, e \in X_2$ and $\{a, b\} <_X \{d, e\}$ while $c <_X d$ and $c \not <_X e.$ If $A = \{a, b, c\},$ and $D = \{d, e\}$ then $(A, D) \in \str X$ because $\{a, b\} <_X D.$ However, $(c, D) \notin \str X$ because there is no element in $\{c\}$ that is less than \textit{all} of $D.$ Now, $(c, d) \in \str X$ because $c <_X d.$ Note that although $(c, D) \notin \str X,$ we do have $(\{a, c\}, D) \in \str X$ because $a <_X D.$   \end{example} 

\begin{example} Suppose $S = \{s_1, s_2\}$ and $T = \{t_1\},$ where $s_1 <_X t_1,$ but $s_2 \not <_X t_1.$ Suppose there is $w \in X_1$ such that $\mub\{w, s_2\} = \emptyset$ and $\mub\{w, s_1\} = \{t_1\}.$ Then the pair $S, T$ satisfies P5 with respect to $w.$ Indeed, if $m >_X s_1$ and $m >_X w,$ then $m \in \mub_X\{s_1, w\}$ because $\dim X = 2.$ So $m = t_1 \in T.$ If $m >_X s_2$ and $m >_X w,$ then $m$ is vacuously a member of $T$ because there are no such elements by assumption. In particular, $(S\cup\{w\}, T)$ dominates $(S, T)$ via $W = \{w\}.$ Therefore, $$(S, T) \less{X} (S\cup\{w\}, T).$$ \end{example} 

\begin{example}\label{EX} Suppose $(A, B) \in \str X,$ and $b \in B.$ There is $K \subset X_1$ disjoint from $A$ such that $\mub_X K = \{b\}$ by Condition J3. We claim $(A, B) \less{X} (A \cup K, b).$ Indeed, $K <_X b$ because $\mub_X K = \{b\}.$ Also, if $a \in A$ and $K <_X m$ and $a <_X m,$ then $m \in \Greaters{X}{K},$ so $$\{m\} \ge_X \{b\} = \min \GB{X}{(K)}.$$ Since $m \in X_2$ and $\dim X = 2,$ we have 
$m = b.$ Since $A \subsetneq A \cup K,$ we have $(A \cup K, b)$ dominates $(A, b)$ via $K.$  \end{example} 

\begin{caution} We caution the reader that if $(C, D)$ dominates $(A, B)$ via $W,$ it need not be the case that $W$ is disjoint from $A.$ In fact, in many cases, $W$ will be a subset of $A.$ For instance, if $A = \{a, b, c\}$ and $\mub_X A = \{d, e\}:=D,$ then $(a, D) \less{X} (A, D)$ because $(A, D)$ dominates $(a, D)$ via $A.$ Likewise, $(A, D) \less{X} (A \cup\{f\}, D),$ where $f \in X_1\setminus A,$ because $(A \cup \{f\}, D)$ dominates $(A, D)$ via $A$ as well.  \end{caution} 

\section{Basic Facts and Definitions Regarding $\str X.$}

Throughout this section, $X$ and $Y$ will always be $J$-posets.

\begin{lem}\label{AB} Let $(A, B) \in \str X.$ The following items are true.
\begin{enumerate}
\item $\height_{\str X}(a, \Greaters{X}{a}) = 0$ for all $a \in X_1.$ In particular, if $\height (A, B) > 0,$ then $B$ is finite. 
\item If $(C, D) \in \str X,$ and $b \in B \cap D,$ then there is $J \subset X_1$ such that $(J, b) \in \str X.$ and $$(A, B) \less{X} (J, b),\text{ and }(C, D) \less{X} (J, b).$$ 
\item We have $\height_{\str X}(A, B) > 0$ if and only if $B = \mub K$ for some $K \subseteq A.$ 
\end{enumerate}
\end{lem}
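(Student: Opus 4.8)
The plan is to handle the three items in sequence, using the earlier examples and Condition J3 as the main tools. For item (1), I would first observe that $(a, \GB{X}^*(a))$ — written $(a, \Greaters{X}{a})$ — has nothing strictly below it in $\str X$: if $(C, D) \less{X} (a, \Greaters{X}{a})$ via $W$, then E1 forces $C \subsetneq \{a\}$, i.e. $C = \emptyset$, which is impossible since pairs in $\str X$ have nonempty first coordinate. Hence these nodes are minimal, so have height $0$. The ``in particular'' clause then follows from item (3) together with the fact (to be recorded) that a pair of the form $(a, \Greaters{X}{a})$ is the \emph{only} kind of element of $\str X$ with infinite second coordinate; so if $\height(A,B) > 0$ then $(A,B)$ is not of that form, forcing $B$ finite. (Alternatively: if $\height(A,B)>0$ then $(A,B)$ dominates something, and if $B$ were infinite then $(A,B) = (a,\Greaters{X}{a})$, contradicting minimality just shown.)

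For item (2), the idea is to produce a single ``common cover'' sitting above both $(A,B)$ and $(C,D)$ using the element $b \in B \cap D$, mimicking Example \ref{EX}. By Condition J3 applied to $b$ and the finite set $F := (A \cup C) \cap X_1$ — note $A$ is finite, and $C$ is finite unless $C = \Greaters{X}{\text{(pt)}}$, a case one must check separately or rule out since $b \in D$ with $D$ possibly infinite; but if $(C,D)=(c,\Greaters{X}{c})$ then $b \in \Greaters{X}{c}$ and we can still run the argument with $F = A \cup \{c\}$ — there is a finite $K \subset X_1$ disjoint from $F$ with $\mub_X K = \{b\}$. Set $J := A \cup C \cup K$. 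Then exactly as in Example \ref{EX}, $K <_X b$, and if $w \in A$ (resp.\ $w \in C$) with $w <_X m$ and $K <_X m$ then $m \in \GB{X}(K)$ forces $m = b$ by $\dim X = 2$; hence $(J,b)$ dominates $(A,B)$ via $K$ and dominates $(C,D)$ via $K$. One must also confirm $(J,b) \in \str X$: it is finite and nonempty, and it has an element (any element of $K$) below $b$, so property (2) of the definition of $\str X$ holds. The strict inclusions $A \subsetneq J$ and $C \subsetneq J$ hold because $K$ is disjoint from $A \cup C$ and nonempty.

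For item (3), one direction is essentially Example \ref{EX}: if $B = \mub_X K$ for some $K \subseteq A$, pick any $b \in B$ (note $B = \mub_X K$ is finite and nonempty by J1, and nonempty since $K \le_X B$ can be witnessed — actually one needs $\GB{X}(K) \ne \emptyset$, which holds because some $a \in A$ satisfies $a <_X B$ so $B \subseteq \GB{X}(\{a\})$, and more care is needed to see $\mub_X K$ relates to $B$; the cleanest route is to take $K$ from Example \ref{EX} with $\mub_X K = \{b\}$ and observe $(A,B) \less{X} (A, B)$ is trivial, so instead argue: if $B = \mub_X K$, $K\subseteq A$, then $(K, B) \in \str X$ and $(A,B)$ dominates $(K,B)$ via $K$ — check E1 needs $K \subsetneq A$, so if $K = A$ replace $K$ by producing via J3 a strictly larger witness, or note we may enlarge: there is $K' \subset X_1 \setminus A$ with $\mub_X K' = \{b\}$ for some $b\in B$ and then $(A, b) \less{X} (A\cup K', b)$, giving $\height(A,b) > 0$; then lift to $B$ via item (2)). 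For the forward direction: if $\height_{\str X}(A,B) > 0$ then some $(A',B') \less{X} (A,B)$ via $W$, so $W \subseteq A$, $W <_X B$, and E3 says any $m$ with (some $a \in A'$) $a <_X m$ and $W <_X m$ lies in $B$; combined with the definition $\mub_X W = \min \GB{X}(W)$ and $\dim X = 2$, one shows $B = \mub_X W$ with $W \subseteq A$.

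The main obstacle I anticipate is the bookkeeping around the \emph{infinite} second-coordinate case — the pairs $(a, \Greaters{X}{a})$ — which are the exceptional elements of $\str X$ and behave differently (they are exactly the height-zero nodes). Throughout items (2) and (3) one must either show these cases don't arise (e.g.\ because $b \in B$ finite already, or because the hypothesis $\height > 0$ excludes them by item (1)) or handle them by hand. The second subtlety is getting the \emph{strict} inclusion in E1 when the natural witness set $W$ might equal $A$; the fix is always available via J3, which lets us pad the first coordinate with fresh height-one nodes disjoint from everything in sight without disturbing the $\mub$ computation (since $\dim X = 2$ collapses $\GB{X}$ of the padded set to the same minimal set). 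Everything else is a direct transcription of Examples \ref{EX} and the Caution, using J1 (finiteness of $\mub$) and J3 (realizability of prescribed $\mub$ sets).
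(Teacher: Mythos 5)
Your treatments of items (1) and (2) are correct and essentially identical to the paper's: minimality of $(a, \Greaters{X}{a})$ follows because Condition E1 would force an empty first coordinate, and the common upper bound in item (2) is built exactly as in Example \ref{EX} by taking $K$ disjoint from $A \cup C$ with $\mub_X K = \{b\}$ and setting $J = A \cup C \cup K$, which then dominates both pairs via $K.$ (One small correction: first coordinates of elements of $\str X$ are \emph{always} finite subsets of $X_1$ --- in the exceptional case the pair is $(x, \Greaters{X}{x})$ with first coordinate a singleton --- so the case analysis you sketch for ``$C$ infinite'' never arises; only the second coordinate can be infinite.)

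Item (3) contains genuine gaps in both directions. For the direction $B = \mub_X K \Rightarrow \height_{\str X}(A,B) > 0,$ your plan is to exhibit $(K, B) \less{X} (A, B),$ and you correctly notice that E1 fails when $K = A$; but both of your proposed repairs go the wrong way: producing $K'$ with $(A, b) \less{X} (A \cup K', b),$ or invoking item (2), yields elements \emph{above} $(A,B)$ (or above some other node), which says nothing about the height of $(A, B)$ itself. The fix is to descend to a singleton: since $B = \mub_X K \subseteq X_2,$ necessarily $|K| \ge 2$ (a singleton's minimal upper bound set is itself, which lies in $X_1$), so for any $k \in K$ one has $\{k\} \subsetneq A,$ and $(A,B)$ dominates $(k, B)$ via $K$; this is what the paper does. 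For the forward direction, your conclusion ``$B = \mub_X W$'' is false in general: E3 only certifies $m \in B$ when $m$ exceeds $W$ \emph{and} some element of the smaller set $A',$ so if, say, $W$ is a singleton not meeting $A',$ then $\mub_X W = W \not\subseteq X_2.$ The paper instead takes $a \in A'$ with $a <_X B' \supseteq B$ and sets $K = W \cup \{a\}$; then $K <_X B$ gives $B \subseteq \GB{X}(K)$ and E3 gives the reverse containment, whence $B = \mub_X K$ with $K \subseteq A.$ One must also rule out $W = \{a\}$ (so that $|K| \ge 2$); the paper does this by noting E3 would then force $B = \Greaters{X}{a},$ which is infinite and hence contradicts item (1). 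Your sketch omits both the adjunction of $a$ and this exclusion.
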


\begin{proof} We prove each part individually.

i) If $(S, T) \les{X} (a, \Greaters{X}{a})$, then $S = \{a\}$ because $S \subseteq\{a\}$ and is nonempty. Since $a <_X T,$ and $T \supseteq \Greaters{X}{a},$ we must have $T = \Greaters{X}{a},$ since the latter set consists of \textit{all} elements of $X_2$ that exceed $a.$ Therefore, $\height_{\str X}(a, \Greaters{X}{a}) = 0.$ Consequently, if $B$ is infinite, then by definition of $\str X,$ we have $(A, B) = (a, \Greaters{X}{a})$ for some $a \in X_1,$ so $\height_{\str X} (A, B) = \height_{\str X} (a, \Greaters{X}{a}) = 0,$ as was just shown.

ii) Choose $K \subset X_1$ disjoint from both $A$ and $C$ such that $\mub_X K = \{b\}.$ By the argument in Example \ref{EX}, if we set $J: = K \cup A \cup C,$ then $(J, d)$ dominates $(A, B)$ via $K$ and $(J, d)$ dominates $(C, D)$ via $K.$

iii) Suppose first that $\height_{\str X} (A, B) > 0.$ Let $(A_0, D) \in \str X$ such that $(A_0, D) \less{X} (A, B),$ and let $a \in A_0$ with $a <_X D.$ Note $a <_X B.$ Let $W \subseteq A$ be such that $(A, B)$ extends $(A_0, D)$ via $W.$ If $W = \{a\},$ then $(A, B)$ extends $(A_0, D)$ via the single element $a.$ Since $a \in A_0,$ Condition E3 would imply $B = \Greaters{X}{a},$ which is infinite. Thus $W \ne \{a\}.$ Set $K = W \cup\{a\}.$ Then $K <_X B.$ Suppose $m >_X K.$ Then $m >_X W$ and $m >_X a$ so $m \in B$ by Condition E3. Therefore, $\mub_X K = B.$

Conversely, if $B = \mub_X K$ for some $K\subseteq A,$ then $|K| \ge 2$ because otherwise we would have $B = K \subset X_1.$ Thus, if $k \in K,$ then $(A, B)$ dominates $(k, B)$ via $K$ because $\{k\} \ne A,$ and $\mub_X K = B$ so Condition E3 is satisfied. In particular, $$(k, B) \less{X} (A, B)$$ so $\height_{\str X}(A, B)  > 0.$ \end{proof}

\begin{mydef} If $B \subset X_2$ is finite and nonempty, define $(\Str X)_B$ to be all elements of $\str X$ whose second ordinate is $B.$ That is, $(\str X)_B$ is the set of all elements in $\str X$ of the form $(A, B).$ Let $(\str X)_B$ inherit the partial ordering on $\str X$ so that $((\str X)_B, \le_{\str X})$ is a subposet of $(\str X, \le_{\str X}).$  \end{mydef}

\begin{notation} If $(A, B) \in \strX{B},$ then we will write $\LB{B}(A, B)$ instead of $\Lower{\strX{B}}{A, B},$ and if $B = \{b\}$ is a singleton, we will similarly write $\LB{b}(A, b)$ instead of $\Lower{\strX{b}}{A, b}.$ In a similar vein, we will write $\height_B(A, B)$ instead of $\height_{\Lower{\strX{B}}{A, B}}(A, B),$ and likewise we will write $\height_b(A, b)$ instead of $\height_{\Lower{\strX{b}}{A, b}}(A, b).$ Finally, if $B = \{b\}$ is a singleton, we will write $(\str X)_b$ instead of $(\str X)_{\{b\}}.$ \end{notation}

\begin{prop}\label{invariant} If $\rho: X \to Y$ is an isomorphism of posets, then the map $\vp: \str X \to \str Y$ given by $$\vp(A, B) = (\rho(A), \rho(B))$$ for all $(A, B) \in \str X$ is an isomorphism of posets. Moreover, $\vp((\str X)_B) = (\str X)_{\rho(B)}$ for all finite, nonempty $B \subset X_2.$ \end{prop}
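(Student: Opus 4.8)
The plan is to verify directly that $\vp$ is a well-defined bijection that is an order-embedding in both directions, exploiting the fact that $\rho$ is a poset isomorphism so it commutes with all the order-theoretic operations appearing in the definitions of $\str X$ and $\les{X}$. First I would check that $\vp$ lands in $\str Y$: given $(A,B) \in \str X$, since $\rho$ is a bijection preserving heights, $\rho$ maps $X_1$ onto $Y_1$ and $X_2$ onto $Y_2$, so $\rho(A) \subset Y_1$ and $\rho(B) \subset Y_2$; finiteness and nonemptiness are preserved; if $(A,B) = (x, \Greaters{X}{x})$ then $\rho(\Greaters{X}{x}) = \Greaters{Y}{\rho(x)}$ because $\rho$ is an order-isomorphism, so $\vp(A,B) = (\rho(x), \Greaters{Y}{\rho(x)})$ has the right form; and if $a \in A$ with $a <_X B$, then $\rho(a) \in \rho(A)$ with $\rho(a) <_Y \rho(B)$. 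Hence $\vp(A,B) \in \str Y$. Applying the same reasoning to $\rho^{-1}$ shows the analogously defined map $\str Y \to \str X$ is well-defined, and it is visibly a two-sided inverse of $\vp$, so $\vp$ is a bijection.

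Next I would show $\vp$ is a poset map and an order-embedding. The key observation is that ``$(C,D)$ dominates $(A,B)$ via $W$'' is expressed entirely in terms of set inclusions among $A,B,C,D,W$ and the relations $W <_X D$ and ``$a \in A$, $a <_X m$, $W <_X m \implies m \in D$,'' all of which are preserved and reflected by the bijection $\rho$ together with its order properties. Concretely: $A \subsetneq C \iff \rho(A) \subsetneq \rho(C)$ and $B \supseteq D \iff \rho(B) \supseteq \rho(D)$ since $\rho$ is a set-bijection; $\emptyset \ne W \subseteq C \iff \emptyset \ne \rho(W) \subseteq \rho(C)$; $W <_X D \iff \rho(W) <_Y \rho(D)$ since $\rho$ is an order-embedding both ways; and for the P5-style clause, because $\rho$ is a bijection every $m' \in Y_2$ is $\rho(m)$ for a unique $m \in X_2$, and $\rho(a) <_Y m' \iff a <_X m$, $\rho(W) <_Y m' \iff W <_X m$, $m' \in \rho(D) \iff m \in D$, so Condition E3 holds for $(\rho(A),\rho(B)) \les{Y} (\rho(C),\rho(D))$ via $\rho(W)$ if and only if it holds for $(A,B) \les{X} (C,D)$ via $W$. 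Therefore $(A,B) \les{X} (C,D)$ via $W$ if and only if $\vp(A,B) \les{Y} \vp(C,D)$ via $\rho(W)$; combined with the trivial case $A=C, B=D$, this gives $(A,B) \les{X} (C,D) \iff \vp(A,B) \les{Y} \vp(C,D)$, so $\vp$ is both a poset map and an order-embedding, hence an isomorphism.

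Finally, the statement $\vp((\str X)_B) = (\str Y)_{\rho(B)}$ is immediate from the definitions: $(A', B') \in \vp((\str X)_B)$ iff $(A',B') = (\rho(A), \rho(B))$ for some $(A,B) \in \str X$, i.e.\ iff the second coordinate of $(A',B')$ is $\rho(B)$ and $(A', \rho(B)) \in \str Y$ — using the bijectivity of $\vp$ onto $\str Y$ established above — which is exactly the defining condition for membership in $(\str Y)_{\rho(B)}$. (I note the proposition as stated writes $(\str X)_{\rho(B)}$ on the right, but $\rho(B) \subset Y_2$, so this must mean $(\str Y)_{\rho(B)}$; I would state it that way.)

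I do not anticipate a serious obstacle here: the whole content is that every ingredient in the definition of $\str X$ and of domination is order-theoretic and set-theoretic, so it transports verbatim along the isomorphism $\rho$. The one point requiring a little care is the quantifier in Condition E3 — one must use surjectivity of $\rho$ (equivalently, range over all $m' \in Y_2$ by writing $m' = \rho(m)$) rather than just functoriality, to see that the reflected condition is genuinely equivalent and not merely implied in one direction. Everything else is bookkeeping.
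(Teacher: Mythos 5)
Your proposal is correct and follows the same route the paper takes; the paper simply declares the transport of the domination conditions along $\rho$ to be ``a straightforward exercise,'' and your write-up supplies exactly those details (well-definedness into $\str Y$, equivalence of E1--E3 under $\rho$, and the fiber statement). Your observation that $(\str X)_{\rho(B)}$ in the statement should read $(\str Y)_{\rho(B)}$ is also correct --- it is a typo in the paper.
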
 

\begin{proof} Since $\rho$ is an isomorphism, it is also a height-preserving bijection of sets. Moreover, for any subsets $C, C'$ of $X,$ we have $\rho(C) \le_Y \rho(C') \iff C \le_X C'.$ It is a straightforward exercise to check that $(A, B) \les{X} (A', B') \iff (\rho(A), \rho(B)) \les{Y} (\rho(A'), \rho(B'))$ and that $\vp$ satisfies the conclusion of the proposition. \end{proof}

\begin{example} Let $k$ be an algebraically closed field, and let $x, y$ be indeterminates over $k.$ By Theorem \ref{McAdam}, $(\spec k[x,y], \subseteq)$ satisfies Condition J4, so $(\spec k[x,y], \subseteq)$ is a $J$-poset. Let $\mf m$ and $\mf n$ be two maximal ideal of $k[x,y].$ By the Nullstellensatz, $\mf m = \langle x - a_1, y- b_1 \rangle$ and $\mf n = \langle x - a_2, y- b_2\rangle,$ for some $a_1, a_2, b_1, b_2 \in k.$  In particular, there is an automorphism $\sigma: k[x,y] \to k[x,y]$ carrying $\mf m$ onto $\mf n,$ and this induces an isomorphism $\sigma^*: (\spec k[x,y], \subseteq) \to (\spec k[x,y], \subseteq)$ such that $\sigma^*(\mf m) = \mf n.$ By Proposition \ref{invariant}, $(\str \spec k[x,y])_{\mf m} \cong (\str \spec k[x,y])_{\mf n}.$  \end{example}

\begin{mydef} If $(A, B) \in \str X,$ define $$\ell(A, B) := |\{a \in A: a <_X B\}|,$$ and $$\eta(A, B) := |A| - \ell(A, B).$$ \end{mydef}

Note that $\ell(A, B) \ge 1$ for all $(A, B) \in \str X$ because each pair $(A, B)$ is required to have an element $a \in A$ such that $a<_X B$ in order to be a member of $\str X.$ 

\begin{lem}\label{ABB} Let $(A, B) \in \strX{B}$ such that $\height_B(A, B) > 0.$ Then
\begin{enumerate}
\item $|\LB{B}(A, B)| = (2^{\ell(A, B)} - 1)2^{\eta(A, B)},$ and
\item $B = \mub A$ if and only if $|\LB{B}(A, B)|$ is odd.
\end{enumerate}
\end{lem}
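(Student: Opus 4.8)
The key structural observation is that $\LB{B}(A,B)$ consists precisely of those pairs $(A_0,B)\in\strX{B}$ with $A_0\subseteq A$ that ``generate'' the domination structure of $(A,B)$. I would begin by characterizing membership in $\LB{B}(A,B)$: a pair $(A_0,B)\les{B}(A,B)$ iff either $(A_0,B)=(A,B)$, or $A_0\subsetneq A$ and $(A,B)$ dominates $(A_0,B)$ via some $W\subseteq A$. Using Condition E3 together with $\dim X = 2$ and the hypothesis $\height_B(A,B)>0$ (so that, by Lemma \ref{AB}(iii), $B=\mub_X K$ for some $K\subseteq A$), I expect to show that the admissible $A_0$ are exactly the nonempty subsets of $A$ that contain at least one element $a$ with $a<_X B$ — i.e., $A_0$ must satisfy $\ell(A_0,B)\ge 1$ — and moreover that the choice of witnessing set $W$ is immaterial (any such $A_0\subsetneq A$ works, taking $W=A$ as in the Caution). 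This reduces (i) to a counting problem: $|\LB{B}(A,B)|$ equals the number of nonempty subsets $A_0\subseteq A$ having nonempty intersection with the ``low'' part $L:=\{a\in A: a<_X B\}$, which has size $\ell:=\ell(A,B)$, while the ``high'' part has size $\eta:=\eta(A,B)$. The number of subsets of $A$ meeting $L$ is $(2^\ell-1)\cdot 2^\eta$ (choose a nonempty subset of $L$ in $2^\ell-1$ ways, an arbitrary subset of the complement in $2^\eta$ ways); one checks that $(A,B)$ itself is included in this count and that every such $A_0$ does lie in $\strX{B}$ and below $(A,B)$, giving part (i).

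For part (ii), I would use the formula from (i): $|\LB{B}(A,B)| = (2^\ell-1)2^\eta$ is odd iff $\eta = 0$, i.e. iff $\eta(A,B)=0$, which by definition means every $a\in A$ satisfies $a<_X B$. So the claim becomes: $B=\mub_X A$ iff $a<_X B$ for all $a\in A$. The forward direction is immediate, since $\mub_X A\subseteq\GB{X}(A)$ forces $A<_X\mub_X A$. For the converse, suppose $a<_X B$ for every $a\in A$; then $B\subseteq\GB{X}(A)$, so $\mub_X A\subseteq \GB{X}(A)$ and $B$ consists of elements of $\GB{X}(A)$, but I must show $B=\mub_X A$ exactly. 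Here I invoke $\height_B(A,B)>0$ and Lemma \ref{AB}(iii): there is $K\subseteq A$ with $B=\mub_X K$. Since $A\supseteq K$ and every element of $A$ lies below every element of $B=\mub_X K$, each element of $B$ is a minimal upper bound of $A$ as well (it bounds $A$ and nothing strictly below it in $X_2$ can, as $\dim X=2$ means elements of $X_2$ have no proper successors and the minimal upper bounds of $A$ lie in $X_2$); conversely any minimal upper bound of $A$ bounds $K$, hence lies in $\mub_X K = B$ by minimality and $\dim X = 2$. Thus $\mub_X A = B$.

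The main obstacle I anticipate is the careful verification in part (i) that the witnessing set $W$ genuinely does not constrain which $A_0$ appear — in particular that Condition E3 is automatically satisfied when we take $W=A$, which requires knowing that $B$ captures \emph{all} common upper bounds of $A$ in $X_2$, and this is exactly where $\height_B(A,B)>0$ (equivalently $B=\mub_X K$, $K\subseteq A$) is essential. A secondary subtlety is the boundary behavior: I need to confirm that subsets $A_0$ with $\ell(A_0,B)=0$ are genuinely excluded (they fail the defining condition (2) of $\str X$, so they are not even elements of $\strX{B}$), and that this exclusion is what produces the factor $2^\ell-1$ rather than $2^\ell$. Once these points are pinned down, both parts follow by the elementary counting already sketched, and (ii) is a clean corollary of the parity of $(2^\ell-1)2^\eta$.
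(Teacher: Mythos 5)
Your overall strategy---invoke Lemma \ref{AB}(iii) to get $B=\mub_X K$ for some $K\subseteq A$, characterize $\LB{B}(A,B)$ as the pairs $(A_0,B)$ with $A_0\subseteq A$ meeting the ``low'' part $L=\{a\in A: a<_X B\}$, and count these as $(2^{\ell(A,B)}-1)2^{\eta(A,B)}$---is exactly the paper's, and your part (ii) matches the paper's as well. There is, however, one concrete misstep in part (i): you propose to witness the domination $(A_0,B)\less{X}(A,B)$ via $W=A$. Condition E2 requires $W<_X B$, i.e.\ \emph{every} element of $W$ strictly below every element of $B$; with $W=A$ this forces $\eta(A,B)=0$, so your witness fails in precisely the cases that the factor $2^{\eta(A,B)}$ is meant to account for. (The Caution you cite is a special case in which $B=\mub_X A$, so $A<_X B$ holds there.) Relatedly, you locate the role of the hypothesis $\height_B(A,B)>0$ in verifying E3 for $W=A$, when its real role is to supply the correct witness: take $W=K$, where $B=\mub_X K$ and $K\subseteq A$. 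Then $K<_X B$ gives E2; and for E3, any $m$ with $W<_X m$ lies in $\GB{X}(K)$, and since $|K|\ge 2$ (a singleton $K$ would give $\mub_X K=K\subset X_1$, not a subset of $X_2$), every element of $\GB{X}(K)$ has height two and is therefore minimal in $\GB{X}(K)$, so $m\in\mub_X K=B$. This single witness $W=K$ works uniformly for every admissible $A_0\subseteq A$, which is the ``choice of $W$ is immaterial'' statement you wanted. With that substitution, your counting argument and both parts of the lemma go through exactly as in the paper.
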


\begin{proof} We prove each part individually.

i) If $\height_B(A, B) > 0,$ then $B = \mub_X K$ for some $K \subseteq A$ by Lemma \ref{AB}. Thus, if $C \subseteq A$ such that there is $c \in C$ with $c <_X B,$ we have $(C, B) \les{X} (A, B)$ via $K.$ There are a total of $2^{|A|}$ subsets of $A,$ of which $2^{\eta(A, B)}$ are subsets $D$ of $A$ such that $(D, B) \notin \str X.$ This is because the sets $D$ are precisely those  subsets of $A$ such that there is no $d \in D$ with $d <_X B.$ That is, $$|\LB{B}(A, B)| = 2^{|A|}-2^{\eta(A,B)} = 2^{\ell(A, B) + \eta(A, B)} - 2^{\eta(A, B)} = (2^{\ell(A, B)}-1)2^{\eta(A, B)}.$$ 

ii)  If $B = \mub_X A,$ then $A <_X B.$ So $\eta(A, B) = 0,$ and therefore $|\LB{B}(A, B)|$ is odd. Conversely, if $|\LB{B}(A, B)|$ is odd, then $\eta(A, B) = 0.$ Since $\height_B(A, B) > 0,$ we have $B = \mub_X K$ for some $K \subseteq A.$ Since $A<_X B$ and $B = \mub_X K,$ it follows that $B = \mub_X A$ as well. \end{proof}

\begin{mydef} For each integer $r \ge 1,$ let $(\mathbb I_r, \le_{\mathbb I_r})$ be the poset with exactly $r$ height-zero nodes $u_1, \ldots, u_r$ and exactly one height-one node $m$ exceeding each height-zero node. \end{mydef}

\begin{example}\label{I2} Let $a, b \in X_1,$ and suppose $\mub_X\{a, b\} = B.$ Then $\LB{B}(\{a, b\}, B)$ is completely described by the following figure:

\begin{figure}[h]
\fbox{
\begin{minipage}{10cm}
\centering
\begin{tikzpicture}[scale = 2]
\node (a) at (-.5, 0) {$(a, B)$};
\node (b) at (.5, 0) {$(b, B)$};
\node (c) at (0, 1) {$(\{a, b\}, B)$};
\draw (a) -- (c) node [midway, sloped, above] {$\{b\}$};
\draw (b) -- (c) node [midway, sloped, above] {$\{a\}$};
\end{tikzpicture}
\caption{The labels of sets $K$ above the lines mean that the higher node dominates the lower node via $K.$}
\end{minipage}
}
\label{BmubA}

\end{figure}

The reader will observe that the poset in Figure \ref{BmubA} is order-isomorphic to $\mathbb I_2,$ and this is no coincidence. Indeed, if $\LB{B}(A, B) \cong \mathbb I_2$ for some $(A, B) \in (\str X)_B,$ then $|\LB{B}(A, B)| = 3,$ which is odd, so $B =\mub_X A$ by Lemma \ref{ABB}. Since $|A| \ge 2,$ and $(a, B) \less{X} (A, B)$ via $A$ for all $a \in A,$ it follows that $A = \{a, b\}$ for some distinct $a, b \in X_1.$ That is, $\LB{B}(A, B) \cong \mathbb I_2$ if and only if $A = \{a, b\}$ for distinct $a, b \in X_1$ and $B = \mub_X A.$ 

\end{example}

\begin{example} Recall that in any poset we say $b$ covers $a$ if $a < b$ and no element of the poset is both greater than $a$ and less than $b.$  Let $X$ be a $J$-poset that satisfies Conditions J4 and P5, such as $(\spec \mathbb Z[x], \subseteq).$ Let $u \in X_1$ and $m \in X_2$ such that $u<_X m.$ Then there exists $v \in X_1$ such that $(\{u,v\}, m)$ covers $(u, m)$ in $\str X,$ and $\LB{m}(\{u, v\}, m) \cong \mathbb I_2.$ \end{example}

\begin{example} This example is intended to demonstrate how $\str X$ detects a failure to enjoy Condition P5. Let $k = \overline{\mathbb Q}$ -- where $\overline{\mathbb Q}$ is the algebraic closure of $\mathbb Q$ -- let $x, y$ be indeterminates over $k,$ let $R = k[x,y],$ and let $X = \Spec R.$ Let $P = \langle x^3 - y^2 \rangle R,$ and let $m = \langle x - a, y - b\rangle$ for $a, b \in k$ be a maximal ideal of $R,$ different from $\langle x, y \rangle,$ such that $P \subset m.$  By [\cite{RSEXPO}, Example 2.1], we know there is no height-one prime ideal $Q$ such that $\mub_X \{P, Q\} = \{m\}.$

Now, $(P, m) \in \str X.$ Let $\mu(P, m)$ be the cardinality of the smallest subposet $\Lambda$ of $(\str X)_m,$  containing $(P, m),$ of positive dimension and of the form $\Lower{m}{A, m}$ for some $(A, m) \in \str X.$ If $\Lower{m}{A, m}$ has positive dimension for some $A,$ then $|\Lower{m}{A,m}| \ge 3$ by Lemma \ref{ABB}. Thus, $\mu(P, m) \ge 3.$ If $\mu(P, m) = 3,$ then $\Lambda \cong \mathbb I_2,$ so $\mub_X\{P, Q\} = \{m\}$ for some $Q \in \spec k[x,y]$ by Example \ref{I2}, a contradiction. Therefore, $\mu(P, m) \ge 4.$ 

Of course, $\mu(P, m) \ge 4$ is not unique to this particular case. If $(x, m)$ is any height zero node in $(\str X)_m$, where $x \in X_1,$ then there will fail to exist $y \in X_1$ such that $\mub_X\{x, y\} = \{m\}$ if and only if $\mu(x, m) \ge 4.$ \end{example} 

\section{Proof of Main Theorem} 

We now turn our attention to the main result of this paper, which is that $\str X$ completely determines $X$ up to isomorphism. In other words, we seek to prove that if $\vp: \str X \to \str Y$ is an isomorphism of posets, where $Y$ is a $J$-poset, then there is an isomorphism $\rho: X \to Y$ such that $$\vp(A, B) = (\rho(A), \rho(B))$$ for all $(A, B) \in \str X.$

First, we show that if $m \in X_2,$ then there is a unique $n \in Y_2$ such that $\vp$ restricts to an isomorphism of posets from $(\str X)_m$ onto $(\str Y)_n.$ This fact will demonstrate that there is a bijection of sets $\rho_2: X_2  \to Y_2.$ Having determined where the ``points on the surface" go (i.e., the maximal nodes of $X$ -- thinking of $X$ as the spectrum of a Noetherian ring), we then have a general sense of where the ``irreducible curves on the surface" should go (i.e., the height-one nodes of $X$) since each $x \in X_1$ is determined by $\Greaters{X}{x}$ by Condition J1. In other words, $\rho_2$ will induce a set bijection $\rho_1: X_1 \to Y_1.$ Of course, there is no mystery as to where the ``generic point" (i.e., the unique minimal node of $X$) should go, so the map $\rho_0: X_0 \to Y_0$ is clear. We then set $\rho:=\rho_0\cup \rho_1 \cup \rho_2$ and show that $\rho$ has the desired properties. 

The second part of the argument involves a deeper study of not only the structural information about $X$ contained within $\str X,$ but how well $\vp$ is carrying that information over to $\str Y.$ 

\begin{lem}\label{L1}Let $\vp: \str X \to \str Y$ be an isomorphism of posets. If $\vp(A_1, m_1),$ and  $\vp(A_2, m_2) \in (\str Y)_n$ for some $m_1, m_2 \in X_2, n\in Y_2,$ and sets $A_1, A_2 \subset X_1,$ then $m_1 = m_2.$ In particular, if $\vp(A, M) \in (\str Y)_n,$ then $M = \{m\}$ is a singleton. \end{lem}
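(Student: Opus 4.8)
The plan is to show that the second coordinate of an element of $\str Y$ is an \emph{intrinsic} invariant of that element as an abstract point of the poset, at least insofar as it is determined by the isomorphism type of sufficiently many principal down-sets. Concretely, I would argue: if $(A_1, m_1)$ and $(A_2, m_2)$ are elements of $\str X$ whose images under $\vp$ both lie in $(\str Y)_n$, I want to produce, inside $\str X$, a common upper bound of the right shape that forces $m_1 = m_2$. The natural tool is Lemma \ref{AB}(ii): if $m_1$ and $m_2$ were distinct there is no element $b$ lying in both $\{m_1\}$ and $\{m_2\}$, so that route is blocked on the $X$ side, whereas on the $Y$ side the images $\vp(A_1, m_1)$ and $\vp(A_2, m_2)$ share the common element $n$ in their (singleton) second coordinates, so Lemma \ref{AB}(ii) \emph{does} apply in $\str Y$ and produces a $J \subseteq Y_1$ with $(J, n)$ above both images. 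Pulling $(J,n)$ back through $\vp^{-1}$ gives an element $(E, F) \in \str X$ above both $(A_1, m_1)$ and $(A_2, m_2)$ in $\str X$, and now I use the structure of the domination relation E1: $(E,F)$ dominating $(A_i, m_i)$ forces $F \subseteq \{m_i\}$, hence $F \subseteq \{m_1\} \cap \{m_2\}$, which is empty unless $m_1 = m_2$. The only subtlety is the degenerate possibility $(A_i, m_i) = (E, F)$, i.e. equality rather than strict domination, but in that case $F = \{m_i\}$ directly, and combined with the other (strict or non-strict) comparison the same intersection argument closes it.

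So the key steps, in order, are: (1) record that by definition of $\str Y$ the second coordinate $n$ is a single element (height-two node), so $(\str Y)_n = (\str Y)_{\{n\}}$; (2) apply Lemma \ref{AB}(ii) in $\str Y$ to the two elements $\vp(A_1, m_1), \vp(A_2, m_2)$, both of which have $n$ in their second coordinate, obtaining $J \subseteq Y_1$ with $(J, n) \in \str Y$ and $(J,n) \ges{Y}$ both of them; (3) set $(E, F) := \vp^{-1}(J, n) \in \str X$ and use that $\vp$ is an isomorphism (in particular $\vp^{-1}$ is a poset map) to get $(A_i, m_i) \les{X} (E, F)$ for $i = 1, 2$; (4) unwind $\les{X}$ via Definition \ref{str}: either $(A_i, m_i) = (E, F)$, giving $F = \{m_i\}$, or $(E,F)$ dominates $(A_i, m_i)$ via some $W$, and then E1 gives $F = B \supseteq D$ read with $B = \{m_i\}$... wait, the orientation: in E1 we have $A \subsetneq C$, $B \supseteq D$ where $(C,D)$ dominates $(A,B)$; here $(A,B) = (A_i, m_i)$ and $(C,D) = (E,F)$, so $\{m_i\} = B \supseteq D = F$, i.e. $F \subseteq \{m_i\}$, and since $F$ is nonempty, $F = \{m_i\}$; (5) conclude $\{m_1\} = F = \{m_2\}$, hence $m_1 = m_2$. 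The final sentence of the lemma (if $\vp(A, M) \in (\str Y)_n$ then $M$ is a singleton) then follows because $(\str Y)_n$ by definition consists of pairs whose second coordinate is the single element $n$, while $\vp(A, M) = (\rho(A), \rho(M))$ only once we know $\vp$ is coordinatewise — but actually at this stage we do not yet have that, so the cleaner argument is: the image lies in $(\str Y)_n$ means its second coordinate equals $\{n\}$ — here I should be careful about whether the paper's $(\str Y)_n$ notation already encodes a singleton second coordinate. Looking at the definition of $(\str X)_B$, it is indexed by finite nonempty $B \subseteq X_2$, so $(\str Y)_n$ is $(\str Y)_{\{n\}}$ and consists of pairs $(A', \{n\})$; so being in it literally says the second coordinate is $\{n\}$, and membership of $\vp(A,M)$ there is what will be invoked — so the "in particular" clause is really an immediate consequence of how the hypothesis is phrased, plus the fact that an element of $\str X$ whose image has singleton second coordinate must itself be a finite-finite pair (not of the form $(x, \Greaters{X}{x})$), which follows because $\vp$ and $\vp^{-1}$ preserve the height-zero elements characterized in Lemma \ref{AB}(i).

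I expect the main obstacle to be purely bookkeeping: getting the inclusion directions in E1 pointed the right way (the dominated pair has the \emph{larger} second coordinate), and handling the equality case of $\les{X}$ uniformly with the strict-domination case so that the intersection $\{m_1\} \cap \{m_2\}$ argument applies in all four combinations. No deep idea beyond Lemma \ref{AB}(ii) is needed; the content is that Lemma \ref{AB}(ii) manufactures the common upper bound on the $Y$ side that the rigidity of $\les{X}$ then refuses to allow on the $X$ side unless the two maximal nodes coincide.
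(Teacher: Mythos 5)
Your argument for the main claim ($m_1=m_2$) is correct and is essentially the paper's proof: apply Lemma \ref{AB}(ii) in $\str Y$ to the two images (both have second coordinate $\{n\}$) to get $(J,n)$ strictly above both, pull back through $\vp^{-1}$ to get $(E,F)$ strictly above both $(A_1,m_1)$ and $(A_2,m_2)$, and read off from E1 that $F$ is a nonempty subset of $\{m_1\}\cap\{m_2\}.$ Your bookkeeping on the direction of the inclusion in E1 is right, and the equality case you worry about cannot occur since Lemma \ref{AB}(ii) gives strict domination.

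The gap is in the ``in particular'' clause. That clause asserts that the set $M\subseteq X_2$ in the \emph{preimage} is a singleton; it is not ``an immediate consequence of how the hypothesis is phrased,'' because the hypothesis only constrains the second coordinate of the \emph{image} $\vp(A,M),$ and at this stage nothing yet relates the second coordinate of $(A,M)$ to that of its image. Your supplementary argument --- that $(A,M)$ must be a finite--finite pair because $\vp$ preserves the height-zero elements of Lemma \ref{AB}(i) --- has two defects. First, Lemma \ref{AB}(i) does not say that every height-zero element has infinite second coordinate: by part (iii), $(c,n)$ has height zero for every $c<_Y n,$ since $\{n\}\neq \mub_Y K$ for $K\subseteq\{c\}.$ So height preservation alone does not exclude $\vp(x,\Greaters{X}{x})\in(\str Y)_n.$ Second, and decisively, even if $M$ is finite, ``finite'' is not ``singleton'': you must still rule out $|M|\ge 2.$ The paper does this by reducing to the first paragraph: for any $m,m'\in M,$ Example \ref{EX} (via J3) yields $(K,m)$ and $(K',m')$ with $(A,M)\less{X}(K,m)$ and $(A,M)\less{X}(K',m');$ applying $\vp,$ both images strictly dominate $\vp(A,M)=(C,n),$ so by E1 their second coordinates are nonempty subsets of $\{n\},$ hence equal to $\{n\},$ placing both images in $(\str Y)_n.$ The first paragraph applied to $(K,m)$ and $(K',m')$ then forces $m=m'.$ This step is genuinely needed (it is exactly what Lemma \ref{L2} invokes), so as written your proof of the lemma is incomplete.
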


\begin{proof} Let $\vp(A_1, m_1) = (C_1, n)$ and let $\vp(A_2, m_2) = (C_2, n).$ By Lemma \ref{AB}, there is $(C_3, n) \in (\str Y)_n$ such that both $$(C_1, n) <_{\str Y} (C_3, n) \text{ and } (C_2, n) <_{\str Y} (C_3, n).$$ Therefore,  both $$(A_1, m_1) <_{\str X} \vp^{-1}(C_3, n):=(A_3, B) \text{ and }(A_2, m_2) <_{\str X} (A_3, B).$$ Now, $B \subseteq \{m_1\} \cap \{m_2\},$ and since $B$ is nonempty, we have $m_1 = m_2.$ 

Suppose $(A, M) \in (\Str X)$ such that $\vp(A, M) = (C, n).$ If $m, m' \in M,$ there are sets $K, K' \subset X_1$ such that both $(A, M) \less{X} (K, m)$ and $(A, M) \less{X} (K', m').$ Therefore, $$(C, n) \less{Y} \vp(K, m) := (J, n)$$ and $(C, n) \less{Y} \vp(K', m') := (J', n).$ Note that we know the second ordinate in the pair $(J, n)$ (resp. $(J', n)$) is $\{n\}$ because it must be a nonempty subset of $\{n\}.$  By the previous paragraph, $m = m'.$ 

\end{proof} 

\begin{lem}\label{L2} If $m \in X_2,$ then the restriction $$\vp|_{(\str X)_m}:(\str X)_m \to \str Y$$ is an isomorphism from $(\str X)_m$ onto $(\str Y)_n$ for some $n \in Y_2.$ \end{lem}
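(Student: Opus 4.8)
The plan is to show that $\varphi$ carries the entire ``fiber'' $(\str X)_m$ onto a single fiber $(\str Y)_n$, and then to invoke the elementary fact that an order isomorphism restricts to an isomorphism between any subposet and its image. The crucial point is that $\varphi^{-1}\colon\str Y\to\str X$ is again an isomorphism of posets, so Lemma \ref{L1} applies to $\varphi^{-1}$ exactly as it does to $\varphi$.

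First I would record that $(\str X)_m\ne\emptyset$: applying Condition J3 with $F=\emptyset$ produces a finite $K\subset X_1$ with $\mub_X K=\{m\}$, hence $K<_X m$ and $(K,m)\in\str X$. Fix any $(A_0,m)\in(\str X)_m$ and write $\varphi(A_0,m)=(C_0,N_0)$. Since $\varphi^{-1}(C_0,N_0)=(A_0,m)\in(\str X)_m$, the last assertion of Lemma \ref{L1} applied to $\varphi^{-1}$ forces $N_0$ to be a singleton $\{n\}$ with $n\in Y_2$; this is the node appearing in the conclusion.

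Next I would establish $\varphi((\str X)_m)=(\str Y)_n$ by double inclusion. For $\subseteq$: given $(A,m)\in(\str X)_m$, write $\varphi(A,m)=(C,N)$; as in the previous step, Lemma \ref{L1} applied to $\varphi^{-1}$ makes $N$ a singleton $\{n'\}$, and then the first assertion of Lemma \ref{L1}, again applied to $\varphi^{-1}$ and to the two elements $\varphi^{-1}(C_0,n)=(A_0,m)$ and $\varphi^{-1}(C,n')=(A,m)$ of $(\str X)_m$, yields $n'=n$. For $\supseteq$: given $(C,n)\in(\str Y)_n$, write $\varphi^{-1}(C,n)=(A,M)$; since $\varphi(A,M)=(C,n)\in(\str Y)_n$, Lemma \ref{L1} (this time applied to $\varphi$ itself) makes $M$ a singleton $\{m'\}$, and its first assertion applied to $\varphi(A_0,m)=(C_0,n)$ and $\varphi(A,m')=(C,n)$ gives $m'=m$, so $\varphi^{-1}(C,n)=(A,m)\in(\str X)_m$.

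Finally, $(\str X)_m$ and $(\str Y)_n$ carry the orders inherited from $\str X$ and $\str Y$, and $\varphi$ is an order-embedding of $\str X$ onto $\str Y$, so the set bijection $\varphi|_{(\str X)_m}\colon(\str X)_m\to(\str Y)_n$ automatically preserves and reflects the order, hence is an isomorphism of posets. I do not expect a genuine obstacle here: all the substance is already contained in Lemma \ref{L1}. The only things requiring care are bookkeeping---keeping straight when Lemma \ref{L1} is being invoked for $\varphi$ versus for $\varphi^{-1}$---and checking that $(\str X)_m$ is nonempty so that the target node $n$ is well defined.
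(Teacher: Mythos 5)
Your proof is correct, and its overall shape matches the paper's: first show $\vp((\str X)_m)\subseteq(\str Y)_n$ by applying Lemma \ref{L1} to $\vp^{-1}$, then show the restriction is onto $(\str Y)_n$, then note that a restriction of an order isomorphism to a fiber and its image is automatically an isomorphism. The one place you genuinely diverge is the surjectivity step. The paper takes $(D,n)\in(\str Y)_n$, invokes Lemma \ref{AB} to manufacture a common upper bound $(J,n)$ of $(D,n)$ and an element already known to be in the image, and pulls that bound back through $\vp^{-1}$ to force the second ordinate of $\vp^{-1}(D,n)$ to equal $\{m\}$. You instead apply the last assertion of Lemma \ref{L1} to $\vp$ to get $\vp^{-1}(C,n)=(A,\{m'\})$, and then the first assertion of Lemma \ref{L1} to $\vp$ and the two elements $(A_0,m)$ and $(A,m')$, whose images both lie in $(\str Y)_n$, to conclude $m'=m$ outright. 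This is a legitimate and more economical use of Lemma \ref{L1}: the paper in effect re-runs the common-upper-bound argument that already underlies that lemma, whereas you simply cite it symmetrically for $\vp$ and $\vp^{-1}$. Your observation that $(\str X)_m\ne\emptyset$ (via Condition J3 with $F=\emptyset$) is a small point the paper leaves implicit but which is indeed needed for the target node $n$ to be well defined.
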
 

\begin{proof} Since $\vp: \str X \to \str Y$ is an isomorphism of posets, so is $\vp^{-1}: \str Y \to \str X.$ In particular, Lemma \ref{L1} applied to $\vp^{-1}: \str Y \to \str X$ shows that if $\vp(A, m) = (C, N)$ for some $C \subset Y_1$ and $N \subset Y_2,$ then $N = \{n\}$ is a singleton and thus $\vp((\str X)_m) \subseteq (\str Y)_n$ for some $n \in Y_2.$ 

To see that $\vp((\str X)_m) = (\str Y)_n,$ let $(D, n) \in (\str Y)_n$ and choose any $(L, n) \in (\str Y)_n$ that is in the image of $\vp|_{(\str X)_m}.$ So $(L, n) = \vp(A_1, m)$ for some $(A_1, m) \in (\str X)_m.$ Now, $(D, n) = \vp(A_2, B)$ because $\vp$ is onto, and by Lemma \ref{L1}, $B = \{b\}$ for some $b \in X_2.$

By Lemma \ref{AB}, there is $(J, n)$ in $(\str Y)_n$ exceeding both $(L, n)$ and $(D, n).$ In particular, we have $$(A_1, m)\less{X}\vp^{-1}(J, n) := (A_3, m)\text{ and } (A_2, b) \less{X} (A_3, m)$$ for some $(A_3, m) \in (\str X)_m.$ Therefore, $(A_2, b) \less{X} (A_3, m)$ so $b = m.$ \end{proof}

 Note that $(\str Y)_n \cap (\str Y)_{n'}$ is nonempty if and only if $n = n',$ so the set map $\rho_2 : X_2 \to Y_2$ that sends $m \to n$ as in Lemma \ref{L2} is well-defined. Lemma \ref{L1} shows that $\rho_2$ is injective. If $n \in Y_2,$ then Lemma \ref{L2} applied to $\vp^{-1}|_{(\str Y)_n}: (\str Y)_n \to \str X$ shows that $\rho_2$ is surjective. We now turn our attention to constructing $\rho_1: X_1 \to  Y_1.$

\begin{lem}\label{L3} If $x \in X_1,$ then $\vp(x, \Greaters{X}{x}) = (s, \Greaters{Y}{s})$ for some $s \in Y_1.$ \end{lem}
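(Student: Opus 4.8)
The statement to prove is Lemma \ref{L3}: if $x \in X_1$, then $\vp(x, \GB{X}^*(x)) = (s, \GB{Y}^*(s))$ for some $s \in Y_1$. The natural strategy is to characterize the height-zero nodes of $\str X$ whose second coordinate is infinite purely in order-theoretic terms, so that the isomorphism $\vp$ must preserve them. By Lemma \ref{AB}(i), every node of the form $(a, \GB{X}^*(a))$ has height $0$ in $\str X$, and conversely any $(A, B) \in \str X$ with $B$ infinite is of this form. So the plan is: first, isolate the height-zero nodes of $\str X$ with infinite second coordinate among \emph{all} height-zero nodes of $\str X$ by an intrinsic property; then transport that property across $\vp$.

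First I would write $(A, B) := \vp(x, \GB{X}^*(x))$, which is a height-zero node of $\str Y$ since $\vp$ is a height-preserving bijection (isomorphisms preserve height because they preserve the length of chains below a node). By Lemma \ref{AB}(i) applied in $\str Y$, either $B$ is finite (and $(A,B)$ could be an arbitrary height-zero node), or $(A, B) = (s, \GB{Y}^*(s))$ for some $s \in Y_1$, which is exactly what we want. So it suffices to rule out the case that $B$ is finite. The key distinguishing feature: a height-zero node $(a, \GB{X}^*(a))$ with infinite second coordinate is \emph{maximal} among all pairs sharing its first coordinate singleton, in the sense that there is no $(C, D) \in \str X$ with $D \subsetneq \GB{X}^*(a)$ and $(a,\GB{X}^*(a)) <_{\str X}(C,D)$ — indeed by the definition of domination, if $(C,D)$ dominates $(a, \GB{X}^*(a))$ then condition E3 with $W \subseteq C$ and the base element $a$ forces every $m$ with $a <_X m$ and $W <_X m$ into $D$; but then... hmm, this needs care. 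Let me instead use a counting/covering argument: a height-zero node $(a, b)$ with $b \in X_2$ (singleton) \emph{is} dominated by something — namely by Example \ref{EX}, $(a, b) <_{\str X} (a \cup K, b)$ for suitable $K$, so it is not maximal and in fact has elements strictly above it in $\str X$. Contrast this with $(a, \GB{X}^*(a))$: I claim nothing in $\str X$ strictly exceeds it. Suppose $(C, D)$ dominates $(a, \GB{X}^*(a))$ via $W$. Then E1 gives $D \subseteq \GB{X}^*(a)$ and $\{a\} \subsetneq C$. Take any $m \in \GB{X}^*(a)$; we want $m \in D$. By J3 (applied with the maximal node $m$) there is a finite $K \subset X_1$ disjoint from $C$ with $\mub_X K = \{m\}$; but this does not immediately give $W <_X m$. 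So the cleaner route is: the set $\GB{X}^*(a)$ is infinite (J2), while any $(C,D) \in \str X$ strictly above a height-zero node must have — by Lemma \ref{AB}(iii) or the domination conditions — finite second coordinate once it has positive height. Actually the cleanest fact is Lemma \ref{AB}(i) itself read contrapositively: \emph{if $\height_{\str X}(A,B) > 0$ then $B$ is finite}. Combined with: $(a, \GB{X}^*(a))$ has height $0$ and \emph{nothing} lies strictly above a node forces... no. Let me reorganize.

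The real point is this: $\vp$ being an isomorphism, the node $(A,B) := \vp(x, \GB{X}^*(x))$ has the same order-theoretic environment in $\str Y$ as $(x, \GB{X}^*(x))$ has in $\str X$. In $\str X$, I claim the node $(x, \GB{X}^*(x))$ is \emph{not strictly below any node of $\str X$} — equivalently it is a maximal element of $\str X$. To see this, suppose $(C,D)$ dominates $(x, \GB{X}^*(x))$ via $W$; then $D \subseteq \GB{X}^*(x)$, $\{x\} \subsetneq C$, $W \subseteq C$, $W <_X D$, and by E3 every $m \in X_2$ with $x <_X m$ and $W <_X m$ lies in $D$. Pick $w \in W$; then $w \in C \subseteq X_1$, and by J2 there are infinitely many $m \in X_2$ with $m >_X w$; by J1, only finitely many of these also satisfy $m >_X x$ (those form $\mub_X\{w,x\}$, finite)... that's the wrong direction. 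Instead: we need $m$ with $x <_X m$ and $W <_X m$; but actually we also need $D$ to be a \emph{valid} second coordinate, i.e., finite or of the special infinite form. Since $D \subseteq \GB{X}^*(x)$ and $(C,D) \in \str X$, either $D$ is finite, or $(C, D) = (c, \GB{X}^*(c))$ for some $c \in X_1$ with $C = \{c\}$ — impossible since $\{x\} \subsetneq C$. So $D$ is finite. But E3 forces $\mub_X\{x\} \cap \GB{X}^*(W) \subseteq D$... I would instead argue: choose $m_0 \in \GB{X}^*(x) \setminus D$ (possible since $\GB{X}^*(x)$ is infinite and $D$ finite); by E3, $m_0$ must fail $W <_X m_0$, i.e., some $w \in W$ has $w \not<_X m_0$. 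This is consistent, so maximality is \emph{not} obviously true this way — the honest approach is different. I expect the author proves it via Lemma \ref{L2}: the node $(x, \GB{X}^*(x))$ lies in $(\str X)_m$ for \emph{no finite} $m$, hence $\vp$ of it cannot lie in any $(\str Y)_n$, and by Lemma \ref{L1} (reading: if $\vp(A,M) \in (\str Y)_n$ then $M$ is a singleton) together with Lemma \ref{AB}(i), the image must have infinite second coordinate and therefore be of the form $(s, \GB{Y}^*(s))$. \textbf{The main obstacle} will be pinning down precisely why a finite second coordinate for $(A,B)$ is impossible — i.e., ruling out that $\vp$ sends the special infinite node to an ordinary height-zero node $(s', b')$ with $b' \in Y_2$. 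I would resolve this by exhibiting an order-theoretic property separating the two: e.g., every ordinary height-zero node $(a, b)$ of $\str X$ lies strictly below some node whose downset is isomorphic to $\IB{2}$ (use Example \ref{EX} and J3, or the fact that $\mub_X$-style configurations exist below suitable nodes), whereas $(x, \GB{X}^*(x))$ — being maximal, or being below nothing of positive height with finite trace — does not; then $\vp$ must match infinite-type nodes with infinite-type nodes. Once the image is forced to have infinite second coordinate, Lemma \ref{AB}(i) in $\str Y$ finishes it, giving $A = \{s\}$ and $B = \GB{Y}^*(s)$.
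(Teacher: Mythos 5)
Your reduction is right: by Lemma \ref{AB}(i) and the definition of $\str Y,$ everything comes down to showing that the second coordinate of $\vp(x, \Greaters{X}{x})$ is infinite. But neither mechanism you propose for that step works, and the main one rests on a false premise. The node $(x, \Greaters{X}{x})$ is \emph{not} maximal in $\str X$: for every $m \in \Greaters{X}{x},$ Condition J3 gives a finite $K_m \subset X_1$ with $x \in K_m$ and $\mub_X K_m = \{m\},$ and then $(K_m, m)$ dominates $(x, \Greaters{X}{x})$ via $K_m$ (E3 holds because any $m'$ exceeding all of $K_m$ must equal $m,$ since $\dim X = 2$). So the separating property you hope for --- that $(x, \Greaters{X}{x})$ lies below nothing of positive height with finite second coordinate --- is false, and the proposed $\IB{2}$-style dichotomy collapses. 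Your fallback via Lemmas \ref{L1} and \ref{L2} (``the image cannot lie in any $(\str Y)_n$'') only rules out the image having a \emph{singleton} second coordinate; it says nothing about a finite second coordinate of size at least $2,$ and Lemma \ref{AB}(i) cannot close that case because the image has height zero.

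The paper's argument runs in exactly the opposite direction from your maximality intuition: it \emph{exploits} the fact that $(x, \Greaters{X}{x}) \less{X} (K_m, m)$ for all of the infinitely many $m \in \Greaters{X}{x}.$ Applying $\vp$ and Lemma \ref{L1}, the image $(S, T)$ is strictly below some $(J, \rho_2(m)) \in (\str Y)_{\rho_2(m)}$ for each such $m,$ and Condition E1 (the second coordinate can only shrink going up) forces $\rho_2(m) \in T.$ Since $\rho_2$ is a bijection, $T$ is infinite, and the definition of $\str Y$ then gives $(S, T) = (s, \Greaters{Y}{s}).$ You had the right ingredients (J3, Lemma \ref{L1}, the fiber decomposition) on the table but did not assemble them into this step, which is the entire content of the lemma.
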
 

\begin{proof} For each $m \in \Greaters{X}{x},$ let $K_m \subset X_1$ be a finite set such that $x \in K_m$ and $\mub_X K_m =\{m\}.$ Then $(x, \Greaters{X}{x}) \less{X} (K_m, m)$ for all $m \in \Greaters{X}{x}.$ By Lemma \ref{L1}, each $\vp(K_m, m) = (J, \rho_2(m))$ for some finite $J \subset Y_1.$ Since $$(x, \Greaters{X}{x}) \less{X} (K_m, m)$$ for all $m \in \Greaters{X}{x},$ and $\vp$ is an isomorphism, the node $\vp(x, \Greaters{X}{x}) \in \str Y$ satisfies $$\vp(x, \Greaters{X}{x}) \less{Y} (J, \rho_2(m))$$ for all such $(J, \rho_2(m)).$ 

Let $(S, T) = \vp(x, \Greaters{X}{x}).$ By the above paragraph, $\rho_2(m) \in T$ for all $m \in \Greaters{X}{x}.$ Since $$\rho_2: X_2 \to Y_2$$ is a bijection of sets, the set $\rho_2(\Greaters{X}{x})$ is infinite and thus $T$ is infinite, so $(S, T) = (s, \Greaters{Y}{s})$ for some $s \in Y_1.$\end{proof}

Define $\rho_1: X_1 \to Y_1$ via $x \to s$ as in Lemma \ref{L3}. $\rho_1$ is well-defined, and if $\rho_1(x_1) = \rho_1(x_2),$ then $\vp(x_1, \Greaters{X}{x_1}) = \vp(x_2, \Greaters{X}{x_2}),$ so $x_1 = x_2$ because $\vp$ is injective (see Remark \ref{isoinj}). That $\rho_1$ is surjective follows after applying Lemma \ref{L3} to $\vp^{-1}: \str Y \to \str X.$  

Let $X_0 = \{x_0\}$ and $Y_0 = \{y_0\}.$ Define $\rho_0: X_0 \to Y_0$ by $\rho_0(x_0) = y_0.$ Set $$\rho:=\rho_0\cup \rho_1 \cup \rho_2.$$

\begin{thm}\label{rho} If $\vp: \str X \to \str Y$ is an isomorphism, then the map $\rho: X \to Y$ as constructed above is an isomorphism of posets. \end{thm}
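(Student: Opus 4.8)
The plan is to verify that the set bijection $\rho = \rho_0 \cup \rho_1 \cup \rho_2$ is both a poset map and an order-embedding; since we have already established (in the discussion preceding the theorem) that each $\rho_i$ is a bijection on the appropriate height stratum, and $\rho$ visibly preserves heights, surjectivity will be automatic and it remains only to check that $x \le_X y \iff \rho(x) \le_Y \rho(y)$. Because $X$ is two-dimensional with a unique minimal node, the only nontrivial comparabilities are those of the form $x <_X m$ with $x \in X_1$ and $m \in X_2$; comparabilities involving $x_0$ are handled trivially by $\rho_0$, and there are no strict comparabilities within $X_1$ or within $X_2$. So the whole theorem reduces to the single biconditional
\begin{equation*}
x <_X m \iff \rho_1(x) <_Y \rho_2(m), \qquad x \in X_1,\ m \in X_2.
\end{equation*}

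To prove the forward direction, I would start from $x <_X m$ and translate this into a statement inside $\str X$ that $\vp$ can transport. The natural vehicle is Condition J3 (as packaged in Example~\ref{EX}): choose a finite $K \subset X_1$ with $x \in K$ and $\mub_X K = \{m\}$, so that $(x, \Greaters{X}{x}) \less{X} (K, m)$. Applying $\vp$ and using Lemma~\ref{L3} gives $(\rho_1(x), \Greaters{Y}{\rho_1(x)}) \less{Y} \vp(K, m)$, and by Lemma~\ref{L1} we know $\vp(K, m) = (J, n)$ for a single maximal node $n \in Y_2$; by the very definition of $\rho_2$ (via Lemma~\ref{L2}, since $(K,m) \in (\str X)_m$), that node is $n = \rho_2(m)$. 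Reading off the domination relation $(\rho_1(x), \Greaters{Y}{\rho_1(x)}) \less{Y} (J, \rho_2(m))$ through Condition E2/E3 forces $\rho_1(x) <_Y \rho_2(m)$ (indeed $W <_Y \{\rho_2(m)\}$ for the witnessing set $W$, and $\rho_1(x)$ is the unique element of the first coordinate of the lower pair, which by E1 is contained in $J$; one checks $\rho_1(x)$ itself lies below $\rho_2(m)$ using that $(\rho_1(x),\Greaters{Y}{\rho_1(x)})\in\str Y$ together with E3). The reverse direction $\rho_1(x) <_Y \rho_2(m) \implies x <_X m$ is then obtained symmetrically by running the identical argument with $\vp^{-1}: \str Y \to \str X$ in place of $\vp$, using that $\rho_1^{-1}$ and $\rho_2^{-1}$ are exactly the maps built from $\vp^{-1}$.

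Once the biconditional is in hand, I would assemble the conclusion: $\rho$ is a bijection of sets because each $\rho_i$ is; $\rho$ is a poset map and an order-embedding because the only comparabilities to test are those just handled (plus the automatic ones through $x_0$); hence $\rho$ is an isomorphism of posets. The step I expect to be the main obstacle is extracting $\rho_1(x) <_Y \rho_2(m)$ cleanly from the domination relation $(\rho_1(x), \Greaters{Y}{\rho_1(x)}) \less{Y} (J, \rho_2(m))$: the definition of domination hands us a witnessing set $W \subseteq J$ with $W <_Y \{\rho_2(m)\}$ and an E3-type closure condition, not directly the desired inequality for $\rho_1(x)$ itself, so one must argue that $\rho_1(x)$ can be taken to lie below $\rho_2(m)$ — which is where membership $(\rho_1(x),\Greaters{Y}{\rho_1(x)}) \in \str Y$ (requiring its first coordinate to have an element below its entire second coordinate) and the E3 condition must be combined carefully. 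A secondary technical point is making sure the $\rho_2$ produced inside Lemma~\ref{L3}'s proof (via the pairs $(K_m,m)$) genuinely agrees with the $\rho_2$ defined through Lemma~\ref{L2}, which follows since both are characterized by $\vp$ carrying $(\str X)_m$ into $(\str Y)_{\rho_2(m)}$.
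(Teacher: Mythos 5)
Your proposal follows essentially the same route as the paper: reduce to the single biconditional $x <_X m \iff \rho_1(x) <_Y \rho_2(m)$, encode $x <_X m$ via Condition J3 as $(x,\Greaters{X}{x}) \less{X} (K,m)$, push through $\vp$ using Lemmas \ref{L1}--\ref{L3}, and run the same argument with $\vp^{-1}$ for the converse. The one step you flag as the ``main obstacle'' is actually immediate and needs none of the E2/E3 bookkeeping you sketch: Condition E1 requires the second coordinate of the dominated pair to \emph{contain} that of the dominating pair, so $(\rho_1(x),\Greaters{Y}{\rho_1(x)}) \less{Y} (J,\rho_2(m))$ gives $\{\rho_2(m)\} \subseteq \Greaters{Y}{\rho_1(x)}$, i.e.\ $\rho_1(x) <_Y \rho_2(m)$ directly.
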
 

\begin{proof} 
By definition, $\rho$ is height-preserving: $\height_Y \rho(x) = \height_X x$ for all $x \in X.$ Additionally, $\rho_i: X_i \to Y_i$ is a bijection of sets for $i = 0, 1, 2.$ In particular, each $\rho_i$ is a surjective and one-to-one map of sets from $X_i$ onto $Y_i$ for $i = 0, 1, 2.$

Suppose $a, b \in X$ and $a \le_X b.$ If $a = x_0,$ the minimal node of $X,$ then $$\rho(a) = \rho(x_0) = y_0 \le Y,$$ so $\rho(a) \le_Y \rho(b)$ in that case. If $a \in X_2,$ then $a = b$ and we have $\rho(a) = \rho(b)$ in that case. We may thus assume $a \in X_1$ and $b \in X_2.$ Since $a <_X b,$ we have $b \in \Greaters{X}{a}.$ Moreover, there is a finite $K \subset X_1$ such that $a \in K$ and $\mub_X K = \{b\}.$ Therefore, $$(a, \Greaters{X}{a}) \less{X} (K, b),$$ and so $$(\rho(a), \Greaters{Y}{\rho(a)}) \less{Y} (J_{\rho(b)}, \rho(b)).$$

Therefore, $\rho(b) \in \Greaters{Y}{\rho(a)},$ or equivalently, $\rho(a) <_Y\rho(b).$

Conversely, assume that $\rho(a) \le_Y \rho(b).$ Similar to above, we may assume that $\rho(a) \in Y_1$ and $\rho(b) \in Y_2$ and $\rho(a) <_Y \rho(b).$  Then there is finite $J \subset Y_2$ such that $\rho(a) \in J$ and $$(\rho(a), \Greaters{Y}{\rho(a)}) \less{Y} (J, \rho(b)).$$ Since $\vp^{-1}(\rho(a), \Greaters{Y}{\rho(a)}) = (a, \Greaters{X}{a})$ and $\vp^{-1}(J, \rho(b)) = (K', b)$ for some $K' \subset X_1,$ we have $a <_X b.$ \end{proof}

We now begin the demonstration that $\vp(A, B) = (\rho(A), \rho(B))$ for all $(A, B) \in \str X.$ 

\begin{mydef}\label{Astar} Let $(A, B) \in \str X,$ and let $\vp, \rho$ be as in Theorem \ref{rho}. Write $\vp(A, B) = (S, T).$ The isomorphism $\rho$ restricts to a bijection of sets from $X_i$ onto $Y_i$ for each $i = 0, 1, 2.$ In particular, since $S \subset Y_1,$ it follows that $S = \rho(A^*)$ for some unique $A^* \subset X_1.$ Likewise, $T = \rho(B^*)$ for some unique $B^* \subset X_2.$ Therefore, given $A\subset X_1, B\subset X_2$ such that $(A, B) \in \str X,$ we define $A^* \subset X_1, B^* \subset X_2$ to be those unique subsets of $X$ such that $\vp(A, B) = (\rho(A^*), \rho(B^*)).$ \end{mydef} 

It is now our goal to show that $A^* = A$ and $B^* = B$ for all $(A, B) \in \str X.$

\begin{lem}\label{FirstStep} Let $(A, B) \in \str X,$ and $\vp, \rho$ as in Theorem \ref{rho}. Write $\vp(A, B) = (\rho(A^*), \rho(B^*))$ for $A^* \subset X_1$ and $B^* \subset X_2$ as in Definition \ref{Astar}. Then the following items hold:
\begin{enumerate}

\item $B^* = B,$ and if $B$ is finite, then the restriction $\vp|_{(\str X)_B}$ is a poset isomorphism from $\strX{B}$ onto $\strY{\rho(B)}.$
\item We have \begin{equation}\label{ELL1} \ell(A^*, B) = \ell(\rho(A^*), \rho(B)) \end{equation} and  \begin{equation}\label{N1}\eta(A^*, B) = \eta(\rho(A^*), \rho(B)) \end{equation}
\item If $\height_B(A, B) > 0,$ then 
\begin{equation}\label{ELL2}  \ell(\rho(A^*), \rho(B)) = \ell(A, B) \end{equation}  and \begin{equation}\label{N2} \eta(\rho(A^*), \rho(B)) =  \eta(A, B).\end{equation} 
\item If $B = \mub A,$ then $A^* = A.$
\end{enumerate}
\end{lem}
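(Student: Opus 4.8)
The plan is to prove the four items in order, with (1) doing the work that makes (3) and (4) possible. For (1), I would first dispose of the case $B$ infinite: then $(A,B)=(x,\Greaters{X}{x})$ for some $x\in X_1$, and Lemma~\ref{L3} together with the definition of $\rho_1$ gives $\vp(A,B)=(\rho(x),\Greaters{Y}{\rho(x)})=(\rho(\{x\}),\rho(\Greaters{X}{x}))$, so $A^*=A$ and $B^*=B$ (the second assertion of (1) being vacuous). For $B$ finite (so $A,B$ are finite and nonempty), to get $B\subseteq B^*$ I would fix $b\in B$, use Example~\ref{EX} to produce a finite $K\subseteq X_1$ with $\mub_X K=\{b\}$ and $(A,B)\less{X}(A\cup K,b)$, observe that $(A\cup K,b)\in(\str X)_b$ so Lemma~\ref{L2} forces $\vp(A\cup K,b)=(C,\rho(b))$ for some $C\subseteq Y_1$, and then read off from $(\rho(A^*),\rho(B^*))=\vp(A,B)\less{Y}(C,\rho(b))$ and Condition~E1 that $\rho(b)\in\rho(B^*)$, i.e.\ $b\in B^*$. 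Running the symmetric argument for $\vp^{-1}$ gives $B^*\subseteq B$, hence $B^*=B$, and it also gives $\vp((\str X)_B)=(\str Y)_{\rho(B)}$; since $\vp$ is an order-embedding, $\vp|_{(\str X)_B}$ is then an isomorphism of these subposets.

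Item (2) is immediate: $\rho$ is a bijection of sets with $a<_X B\iff\rho(a)<_Y\rho(B)$, so it restricts to a bijection $\{a\in A^*:a<_X B\}\to\{s\in\rho(A^*):s<_Y\rho(B)\}$, giving \eqref{ELL1}, and $|A^*|=|\rho(A^*)|$ then gives \eqref{N1}. For (3), I would note that $\height_B(A,B)>0$ forces $B$ finite, so by (1) the isomorphism $\vp|_{(\str X)_B}$ carries $\LB{B}(A,B)$ bijectively onto $\LB{\rho(B)}(\rho(A^*),\rho(B))$ and $(A,B)$ to $(\rho(A^*),\rho(B))$; hence $\height_{\rho(B)}(\rho(A^*),\rho(B))>0$ and $|\LB{B}(A,B)|=|\LB{\rho(B)}(\rho(A^*),\rho(B))|$. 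Plugging both sides into Lemma~\ref{ABB}(i) gives $(2^{\ell(A,B)}-1)2^{\eta(A,B)}=(2^{\ell(\rho(A^*),\rho(B))}-1)2^{\eta(\rho(A^*),\rho(B))}$, and since $2^{\ell}-1$ is odd for every $\ell\ge1$, comparing $2$-adic valuations and odd parts yields \eqref{ELL2} and \eqref{N2}.

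For (4), assume $B=\mub_X A$. Then $A<_X B$, so $\eta(A,B)=0$ and $\ell(A,B)=|A|$, and $|A|\ge2$ (if $A=\{a\}$ then $B=\mub_X\{a\}=\{a\}\not\subseteq X_2$), whence $\height_B(A,B)>0$. From (2) and (3), $\eta(\rho(A^*),\rho(B))=0$ and $\ell(\rho(A^*),\rho(B))=|A|$, so $|A^*|=|A|\ge2$ and $\rho(A^*)<_Y\rho(B)$; and from (1) with Lemma~\ref{ABB}(i)--(ii), $|\LB{\rho(B)}(\rho(A^*),\rho(B))|=|\LB{B}(A,B)|=2^{|A|}-1$ is odd, so $\rho(B)=\mub_Y\rho(A^*)$. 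The crux is then an order-theoretic recovery of the first coordinate: for any $J$-poset $Z$ and any $A'\subseteq Z_1$ with $|A'|\ge2$ and $B'=\mub_Z A'$,
\begin{equation*}
\{z\in Z_1:(z,\Greaters{Z}{z})\less{Z}(A',B')\}=A',
\end{equation*}
because for $z\in A'$ the pair $(A',B')$ dominates $(z,\Greaters{Z}{z})$ via $W=A'$ (Condition~E3 holds since $A'<_Z m$ forces $m\ge$ some element of $\mub_Z A'=B'$, hence $m\in B'$ by $\dim Z=2$), while for $z\notin A'$ Condition~E1 fails as $\{z\}\not\subseteq A'$. Applying this in $X$ to $(A,B)$, in $Y$ to $(\rho(A^*),\rho(B))$ (legitimate since $|\rho(A^*)|\ge2$ and $\rho(B)=\mub_Y\rho(A^*)$), and using $\vp(z,\Greaters{X}{z})=(\rho(z),\Greaters{Y}{\rho(z)})$ (Lemma~\ref{L3}) together with the fact that $\vp$ is an isomorphism, I would conclude that $\{(\rho(z),\Greaters{Y}{\rho(z)}):z\in A\}=\{(s,\Greaters{Y}{s}):s\in\rho(A^*)\}$, so $\rho(A)=\rho(A^*)$ and therefore $A^*=A$.

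The main obstacle is item (4): parts (1)--(3) and Lemma~\ref{ABB} only pin $A^*$ down up to cardinality and the value $\mub_Y\rho(A^*)$, which is far from forcing $A^*=A$. The decisive extra input is the displayed recovery identity, which expresses $A$ inside $\str X$ (and $\rho(A^*)$ inside $\str Y$) purely through the height-zero ``generic'' nodes of the form $(z,\Greaters{}{z})$ lying below $(A,B)$ --- nodes whose images under $\vp$ are already known from Lemma~\ref{L3}. The remaining effort is finiteness-and-membership bookkeeping needed to legitimize each invocation of Lemmas~\ref{AB}, \ref{L2}, and \ref{ABB}.
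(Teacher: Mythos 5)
Your proposal is correct and follows essentially the same route as the paper's proof: item (1) by pushing $(A,B)$ below nodes of the form $(K,b)$ and invoking Lemma \ref{L2} in both directions, item (3) via the cardinality formula of Lemma \ref{ABB} together with unique factorization, and item (4) via the height-zero nodes $(a,\Greaters{X}{a})$ lying below $(A,B)$ together with Lemma \ref{L3}. The only organizational difference is in item (4), where the paper deduces $A\subseteq A^*$ from those generic nodes and finishes with $|A|=|A^*|$, whereas you prove both inclusions by first establishing $\rho(B)=\mub_Y\rho(A^*)$ via the parity criterion of Lemma \ref{ABB} and then applying your recovery identity in both $X$ and $Y$; both arguments are valid.
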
 
\begin{proof} We prove each item individually.

\textbf{Item 1.} If $b \in B,$ choose $K \subset X_1$ such that $(A, B) \less{X} (K, b).$ Upon applying $\vp,$ we have $$(\rho(A^*), \rho(B^*)) \less{Y} (\rho(K^*), \rho(b)),$$ by Lemma \ref{L2} applied to $(K^*, b) \in (\str X)_b.$ In particular, $\rho(b) \in \rho(B^*),$ so $b \in B^*.$ 

Conversely, if $b \in B^*,$ then there is $F^* \subset X_1$ such that $$(\rho(A^*), \rho(B^*)) \less{Y} (\rho(F^*), \rho(b)).$$ Since $\vp$ is an isomorphism, we must have $$(A, B) \less{X} (F, b)$$ for some $F \subset X_1.$ In particular, $b \in B.$ So $B = B^*.$ 

To see the second part, we need only show that the restriction $\vp|_{\strX{B}}$ is surjective, since the above work shows that $$\vp|_{\strX{B}}: \strX{B} \to \strY{\rho(B)}$$ is well-defined, and the map is an order-embedding because $\vp$ is. Indeed, if $(\rho(C^*), \rho(B)) \in \strY{\rho(B)},$ and $\vp(C, D) = (\rho(C^*), \rho(B)),$ then, by the above paragraph, $\rho(B) = \rho(D),$ so $B = D$ since $\rho$ is a set bijection. 

\textbf{Item 2.} Equations \ref{ELL1} and \ref{N1} follow from the definitions of the functions $\ell, \eta: \str X \to \mathbb Z^{\ge 0}$ and Theorem \ref{rho} since $$U \le_X V \iff \rho(U) \le_Y \rho(V)$$ for all subsets $U, V$ of $X.$ 

\textbf{Item 3.} By Item 1, the restriction $$\vp|_{\strX{B}}: \strX{B} \to \strY{\rho(B)}$$ is an isomorphism of posets. Let $\sigma := \vp|_{\strX{B}}.$ Then, \begin{equation}\label{LB} |\LB{B}(A, B)| = \left|\sigma(\LB{B}(A, B))\right| = |\LB{\rho(B)}(\sigma(A, B))| = |\LB{\rho(B)}(\rho(A^*), \rho(B))|.\end{equation} 

Since $\height_B(A, B) > 0,$ we have $|\LB{B}(A, B)| = (2^{\ell(A, B)} - 1)2^{\eta(A, B)}$ by Lemma \ref{ABB}. Since $\vp$ is an isomorphism, $$\height_{\rho(B)} (\rho(A^*), \rho(B^*)) = \height_{\rho(B)} \vp(A, B) = \height_B(A, B) > 0.$$ Therefore, by Lemma \ref{ABB}, we have $$|\LB{\rho(B)}(\rho(A^*), \rho(B^*))| = (2^{\ell(\rho(A^*), \rho(B^*))}-1)2^{\eta(\rho(A^*), \rho(B^*))}.$$

By Equation \ref{LB} and the Fundamental Theorem of Arithmetic, we have $$\eta(\rho(A^*), \rho(B^*)) = \eta(A, B)$$ and $$\ell(\rho(A^*), \rho(B^*)) = \ell(A, B)$$ as desired.

\textbf{Item 4.} Suppose $B = \mub A.$ As in Item 3, let $\sigma$ be $\vp|_{\strX{B}}.$ By Lemma \ref{ABB} and the fact that $\sigma$ is an isomorphism from $\strX{B}$ onto $\strY{\rho(B)},$ we have $$\height_B(A, B) = \height_{\rho(B)}(\rho(A^*), \rho(B)) > 0.$$ By Items 2 and 3 and the fact that $A <_X B,$ we have \begin{equation}\label{EQ1} \eta(\rho(A^*), \rho(B))= \eta(A^*, B) = \eta(A, B)  = 0\end{equation} and \begin{equation}\label{EQ2} \ell(\rho(A^*), \rho(B)) = \ell(A^*, B) = \ell(A, B). \end{equation} By Equations \ref{EQ1} and \ref{EQ2}, we have $|A^*| = |A|.$ 

If $a \in A,$ then $(a, \Greaters{X}{a}) \less{X} (A, B)$ via $A.$ By Lemma \ref{L3}, $$(\rho(a), \Greaters{Y}{\rho(a)}) \less{Y} (\rho(A^*), \rho(B)).$$ So $\rho(a) \in \rho(A^*),$ and thus $a \in A^*.$ Since $a$ was arbitrary, we have $A \subseteq A^*.$ Finally, since both $A$ and $A^*$ are finite sets with $|A| = |A^*|,$ we have $A = A^*$ as desired. 
\end{proof} 

\begin{remark}\label{Item1comment} A careful inspection of the proof of Item 1 shows that we only use the fact that $\rho_2:X_2 \to Y_2$ is a set bijection of sets. \end{remark} 

\begin{mydef}\label{NCD1} If $(C, D) \in \str X$ and $d \in D,$ let $$\mathcal N(C, d) := \{(A, d) \in \str X: (C, d) \less{X} (A, d) \text{ and } \eta(C, d) = \eta(A, d)\}.$$ \end{mydef}

\begin{remark}\label{NCD} If $(C, D) \in \str X$ and $d \in D,$ then $(C, d) \less{X} (A, d)$ if and only if $(C, D) \less{X} (A, d).$ \end{remark}

\begin{cor}\label{SecondStep} Suppose $(C, D) \in \str X, d \in D$, and let $\mathcal N(C, D)$ be as in Definition \ref{NCD1}. If $\vp(A, d) = (\rho(A), \rho(d))$ for all $(A, d) \in \mathcal N(C, d),$ then $\vp(C, D) = (\rho(C), \rho(d)).$ In particular, if $C <_X d,$ then $\vp(C, D) = (\rho(C), \rho(D)).$ \end{cor}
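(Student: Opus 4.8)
The plan is to observe first that, by Lemma~\ref{FirstStep}(1), $\vp(C,D)=(\rho(C^*),\rho(D))$ for a unique finite $C^*\subseteq X_1$, so the statement reduces to proving $C^*=C$ (when $D$ is infinite, $(C,D)=(c,\Greaters{X}{c})$ and this is Lemma~\ref{L3}, so I may assume $D$, hence $C$, finite); recall also that $\mathcal N(C,d)$ consists of pairs $(A,d)$ lying \emph{strictly} above $(C,d)$ in $\str X$ with $\eta(A,d)=\eta(C,d)$. I would prove $C^*\subseteq C$ and $C^*\supseteq C$ separately. For $C^*\subseteq C$: using Condition~J3, choose finite $K_1,K_2\subseteq X_1$, pairwise disjoint and disjoint from $C$, with $\mub_X K_i=\{d\}$. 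By Example~\ref{EX} we have $(C,D)\less{X}(C\cup K_i,d)$ via $K_i$; moreover every element of $K_i$ lies below $d$ and $K_i\cap C=\emptyset$, so $\eta(C\cup K_i,d)=\eta(C,d)$, i.e. $(C\cup K_i,d)\in\mathcal N(C,d)$. The hypothesis gives $\vp(C\cup K_i,d)=(\rho(C\cup K_i),\rho(d))$, and applying $\vp$ to $(C,D)\less{X}(C\cup K_i,d)$ and reading off Condition~E1 of the resulting domination yields $\rho(C^*)\subsetneq\rho(C\cup K_i)$, i.e. $C^*\subseteq C\cup K_i$; intersecting over $i=1,2$ gives $C^*\subseteq C$. (In particular the case $|C|=1$ is now complete, since $C^*\neq\emptyset$.)

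For $C^*\supseteq C$ it suffices, given $C^*\subseteq C$, to show $|C^*|=|C|$. If $\height_D(C,D)>0$ this is immediate: then $D=\mub_X K$ for some $K\subseteq C$ by Lemma~\ref{AB}(3), so Lemma~\ref{FirstStep}(2)--(3) gives $\ell(C^*,D)=\ell(\rho(C^*),\rho(D))=\ell(C,D)$ and likewise $\eta(C^*,D)=\eta(C,D)$, whence $|C^*|=\ell(C,D)+\eta(C,D)=|C|$ and therefore $C^*=C$. The remaining case $\height_D(C,D)=0$, where $(C,D)$ is a minimal element of the fibre $(\str X)_D$ (Lemma~\ref{AB}(3)) and hence $\vp(C,D)$ is minimal in $(\str Y)_{\rho(D)}$, is the delicate one, since there Lemma~\ref{FirstStep}(3) yields nothing. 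My plan for it is to first prove the base case $|C|=1$ directly --- the computation above for $C^*\subseteq C$ goes through with the hypothesis replaced by Lemma~\ref{FirstStep}(4), because $\mub_X(\{x\}\cup K_i)=\{b\}$ for every $b\in B$ --- obtaining $\vp(\{x\},B)=(\rho(x),\rho(B))$ for all $(\{x\},B)\in\str X$, so that $\vp$ carries the elements of $\str X$ with one‑element first coordinate bijectively onto those of $\str Y$; one then pushes this upward by induction on $|C|$, using at each stage that $\vp$ and $\vp^{-1}$ both preserve the size of the first coordinate on the already‑handled range, to conclude $|C^*|\ge|C|$, which with $C^*\subseteq C$ forces $C^*=C$.

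For the ``in particular'' clause, if $C<_X d$ then every $(A,d)\in\mathcal N(C,d)$ has $C\subsetneq A$ and $\eta(A,d)=\eta(C,d)=0$, so $A<_X d$; if $m\in\mub_X A$ and $W\subseteq A$ is a witness of $(C,d)\less{X}(A,d)$, then for any $c\in C$ one has $c<_X m$ and $W<_X m$, so Condition~E3 forces $m=d$, i.e. $\mub_X A=\{d\}$. Hence $\vp(A,d)=(\rho(A),\rho(d))$ by Lemma~\ref{FirstStep}(4), the hypothesis of the corollary holds automatically, and the main statement gives $\vp(C,D)=(\rho(C),\rho(D))$.

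The step I expect to be the main obstacle is the case $\height_D(C,D)=0$: there no $K\subseteq C$ has $\mub_X K=D$, Lemma~\ref{ABB} gives no usable count, and the first coordinate of $(C,D)$ is invisible from its strict lower bounds in $\str X$ (indeed $(C,D)$ may be minimal in all of $\str X$), so instead of producing elements below $(C,D)$ with the right first coordinates one is forced to control how $\vp$ interacts with the stratification of $\str X$ by the cardinality of the first coordinate and with the minimal elements of the fibres $(\str X)_D$.
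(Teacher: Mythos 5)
Your argument for $C^*\subseteq C$ (two disjoint witness sets $K_1,K_2$ with $\mub_X K_i=\{d\}$, both of which land in $\mathcal N(C,d)$), your counting argument via Lemma \ref{FirstStep}(2)--(3) when $\height_D(C,D)>0$, the base case $|C|=1$, and the ``in particular'' clause are all correct. But the step you yourself flag as the main obstacle --- the fibre-height-zero case with $|C|\ge 2$ --- is a genuine gap, not just a detail to be filled in. The proposed induction on $|C|$ cannot be run as described: the corollary's hypothesis is local, concerning only $\mathcal N(C,d)$, and it does not descend to the smaller pairs your recursion would need (for instance, to conclude anything about $(C^*,D)$ or about other fibre-minimal elements with smaller first coordinate you would need the analogous hypothesis for $\mathcal N(C^*,d)$, which is not available; elements of $\mathcal N(C^*,d)$ need not contain $C$ and need not have $\eta$ equal to $\eta(C,d)$). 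Likewise, ``$\vp$ and $\vp^{-1}$ preserve the size of the first coordinate on the already-handled range'' gives no purchase on a fibre-minimal $(C,D)$: such a node has no strict lower bounds in $\str X$, and nothing you have established ties $|C|$ to the poset position of $(C,D)$ from above. So the inequality $|C^*|\ge|C|$ is asserted, not proved, precisely in the case where it is needed.

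The paper avoids the case split on $\height_D(C,D)$ entirely with a pull-back argument you are missing. Choose, by J3, a finite $F\subset X_1$ disjoint from $C\cup C^*$ with $\mub_X F=\{d\}$. Then $(\rho(C^*\cup F),\rho(d))$ dominates $\vp(C,D)=(\rho(C^*),\rho(D))$ via $\rho(F)$ (since $\rho$ is already known to be an isomorphism, $\mub_Y\rho(F)=\{\rho(d)\}$). By surjectivity of $\vp$ there is $(J,d)\in\str X$ with $\vp(J,d)=(\rho(C^*\cup F),\rho(d))$, and then $(C,D)\less{X}(J,d)$, so $C\subsetneq J$ by E1. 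One checks $\eta(J,d)=\eta(C,d)$: the inequality $\eta(C,d)\le\eta(J,d)$ from $C\subseteq J$, and the reverse from Lemma \ref{FirstStep}(2)--(3) applied to $(J,d)$ (a positive-height node, with $J^*=C^*\cup F$) together with $C^*\subseteq C$. Hence $(J,d)\in\mathcal N(C,d)$, so the hypothesis forces $\vp(J,d)=(\rho(J),\rho(d))$ and therefore $J=C^*\cup F$; since $C\subseteq J$ and $C\cap F=\emptyset$, this gives $C\subseteq C^*$, i.e.\ $C=C^*$, uniformly in the height of $(C,D)$. The key idea, absent from your proposal, is to use surjectivity of $\vp$ to pull back an element of $(\str Y)_{\rho(d)}$ built from $C^*$ and then let the $\mathcal N(C,d)$-hypothesis identify its first coordinate; this is what makes the reverse inclusion accessible even for fibre-minimal nodes.
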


\begin{proof}  

By Lemma \ref{FirstStep} and Definition \ref{Astar}, we may write $\vp(C, D) = (\rho(C^*), \rho(D)).$ We claim $C = C^*.$ By Condition J3, there exists a finite set $F \subset X_1$ such that $F$ is disjoint from $C\cup C^*$ and $\mub_X F = \{d\}.$ Therefore, \begin{equation}\label{Ineq1} (C, D) \less{X} (C \cup F, d)\end{equation} because $(C \cup F, d)$ dominates $(C, D)$ via $F.$ Now, $$\eta(C \cup F, d) = \eta(C, d)$$ because $F <_X d.$ Therefore, $$(C \cup F, d) \in \mathcal N(C, d),$$ (see Remark \ref{NCD}) and so \begin{equation}\label{CorEq1} \vp(C \cup F, d) = (\rho(C \cup F), \rho(d)).\end{equation} 

 Applying $\vp$ to (\ref{Ineq1}) and using (\ref{CorEq1}), we see that $$(\rho(C^*), \rho(D)) \less{Y} (\rho(C \cup F), \rho(d)).$$ Therefore, $C^* \subseteq C \cup F,$ and since $F$ is disjoint from $C^*,$ we have $C^* \subseteq C.$ 
 
 Now $(\rho(C^*) \cup \rho(F), \rho(d))$ dominates $(\rho(C^*), \rho(D))$ via $\rho(F)$ because $$\rho(\mub_X F) = \mub_X \rho(F) = \{\rho(d)\},$$ which follows from the fact that $\rho$ is an isomorphism. Now, there is $J \subset X_1$ such that $$\vp(J, d) = (\rho(C^* \cup F), \rho(d)).$$ 
 
 We claim $(J, d) \in \mathcal N(C, d).$ Since $$(\rho(C^*), \rho(D)) \less{Y} (\rho(C^*) \cup \rho(F), \rho(d)),$$ we have $$(C, D) \less{X} (J, d).$$ We need only show that $\eta(J, d) = \eta(C, d)$ in order to conclude that $(J, d) \in \mathcal N(C, d).$ Since $C \subseteq J,$ we have $\eta(C, d) \le \eta(J, d).$ By parts 2 and 3 of Lemma \ref{FirstStep} (note that $J^* = C^* \cup F$), \begin{equation}\label{CorEq2}\eta(J, d) = \eta(J^*, d) = \eta(C^* \cup F, d) = \eta(C^*, d) \le \eta(C, d).\end{equation} where the last inequality in (\ref{CorEq2}) follows because $C^* \subseteq C$ by the above work. Therefore, $\eta(C, d) = \eta(J, d),$ so $(J, d) \in \mathcal N(C, d),$ and thus $$\vp(J, d) = (\rho(J), \rho(d)) = (\rho(J^*), \rho(d)) = (\rho(C^* \cup F), \rho(d)).$$ Therefore, $$J = J^* = C^* \cup F.$$ Since $C \subseteq J,$ and $C$ is disjoint from $F,$ we have $C \subseteq C^*$ so $C = C^*.$ 
 
 To see the final part of this corollary, suppose $C <_X d,$ and suppose $(A, d) \in \mathcal N(C, d).$ Then $$\eta(A, d) = \eta(C, d) = 0,$$ and $\height_d(A, d) > 0,$ so $$\mub_X A = \{d\}$$ by Lemma \ref{ABB}. By part 4 of Lemma \ref{FirstStep}, $$\vp(A, d) = (\rho(A), \rho(d)).$$ By the above work, $\vp(C, D) = (\rho(C), \rho(D)).$ \end{proof}

\begin{thm}\label{rhoAB} Let $\vp, \rho$ be as in Theorem \ref{rho}. If $(A, B) \in \str X,$ then $\vp(A, B) = (\rho(A), \rho(B)).$ \end{thm}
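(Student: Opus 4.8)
The plan is to prove the theorem by induction on the integer $\eta(A,B)$, reducing along the way to the case in which the second coordinate is a singleton so that Corollary~\ref{SecondStep} becomes applicable. First I would dispose of the case $B$ infinite: by Lemma~\ref{AB}(i) this forces $(A,B)=(a,\Greaters{X}{a})$ for some $a\in X_1$, and then Lemma~\ref{L3}, together with the fact that the isomorphism $\rho$ satisfies $\rho(\Greaters{X}{a})=\Greaters{Y}{\rho(a)}$, gives $\vp(A,B)=(\rho(a),\Greaters{Y}{\rho(a)})=(\rho(A),\rho(B))$. So I may assume $B$ is finite and write $\vp(A,B)=(\rho(A^*),\rho(B^*))$ as in Definition~\ref{Astar}; by Lemma~\ref{FirstStep}(1) we already know $B^*=B$, so the entire content of the theorem is the assertion $A^*=A$.

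For the base case $\eta(A,B)=0$ we have $A<_X B$, so choosing any $d\in B$ and invoking the final sentence of Corollary~\ref{SecondStep} (with $C=A$, $D=B$) yields $A^*=A$. For the inductive step, suppose $\eta(A,B)=\eta_0\ge 1$ and that the theorem holds for all elements of $\str X$ of strictly smaller $\eta$-value; fix $d\in B$. If $\eta(A,d)<\eta_0$ for some such $d$, then every $(A',d)\in\mathcal N(A,d)$ has $\eta(A',d)=\eta(A,d)<\eta_0$ (by the definition of $\mathcal N$ and Remark~\ref{NCD}), so the inductive hypothesis applies to each of them and Corollary~\ref{SecondStep} gives $A^*=A$. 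Otherwise $\eta(A,d)=\eta_0$ for every $d\in B$, and again by Corollary~\ref{SecondStep} it suffices to prove the theorem for every $(A',d)\in\mathcal N(A,d)$. Each such $(A',d)$ satisfies $(A,d)\less{X}(A',d)$, hence $\height_d(A',d)>0$, so by Lemma~\ref{AB}(iii) there is $K\subseteq A'$ with $\mub_X K=\{d\}$, and $K$ lies inside the set $A'_0$ of elements of $A'$ that are $<_X d$. Writing $A'=A'_0\sqcup\{a\in A':a\not<_X d\}$, for each $a_0\in A'$ with $a_0\not<_X d$ the pair $(A'\setminus\{a_0\},d)$ lies in $\str X$, is $\less{X}(A',d)$ via $K$, and has $\eta$-value $\eta_0-1$; applying $\vp$ and the inductive hypothesis gives $A'\setminus\{a_0\}\subseteq (A')^*$, while Lemma~\ref{FirstStep}(2),(3) and Lemma~\ref{ABB} force $\lvert(A')^*\rvert=\lvert A'\rvert$. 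When $\eta_0\ge 2$ the sets $A'\setminus\{a_0\}$, as $a_0$ ranges over the at least two elements of $A'$ not below $d$, cover $A'$, so $(A')^*=A'$ and this case is finished.

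The hard part will be the remaining case $\eta_0=1$, where only one element can be peeled off so the covering argument of the previous paragraph collapses, and more generally the minimal nodes of $(\str X)_d$, where no element can be peeled off at all. In the case $\eta_0=1$ the bookkeeping above only tells us that $(A')^*=A'_0\cup\{z\}$ for a single ``extra'' node $z\in X_1$ with $z\not<_X d$; moreover $z$ is forced to be the same for all comparable choices of $(A',d)\in\mathcal N(A,d)$, since the $\vp$-images of two such comparable pairs are themselves comparable and their first coordinates differ only by a node not below $d$. What remains is to identify $z$ with the omitted node $a_0$. My plan here is to carry out a finer analysis of $\str X$ near $(A',d)$: using Condition~J3 and Proposition~\ref{specialt} I would build auxiliary pairs $(\{x\},D)$ with $x<_X d$, $x<_X e$ for suitable $e\in\Greaters{X}{a_0}$, and $d\in D$, which are comparable to $(A',d)$ and whose $\vp$-images are already pinned down (they have $\eta$-value $0$, or are of the form $(x,\Greaters{X}{x})$), and I would combine these with the identifications $\vp(a_0,\Greaters{X}{a_0})=(\rho(a_0),\Greaters{Y}{\rho(a_0)})$ of Lemma~\ref{L3} and $\vp(a_0,e)=(\rho(a_0),\rho(e))$ coming from the $\eta=0$ case. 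Applying the entire argument also to the inverse isomorphism $\vp^{-1}$, whose associated set bijection is $\rho^{-1}$, should supply the reverse inclusion and close the loop. I expect this identification of $z$ with $a_0$ — equivalently, the treatment of the minimal nodes of $(\str X)_d$ — to be the main obstacle and the most technical portion of the proof; once it is in place, the induction is complete, and combining it with Theorem~\ref{rho} gives the full statement (and, one checks, the analogue with $\str X,\str Y$ replaced by $\strXF,\strYF$).
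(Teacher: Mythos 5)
Your reduction framework is sound and matches the paper's: the infinite-$B$ case via Lemma \ref{L3}, the reduction to $A^*=A$ via Lemma \ref{FirstStep}(1), the passage through Corollary \ref{SecondStep} to nodes $(A',d)$ of positive height with singleton second coordinate, and the case $\eta=0$ via Lemma \ref{FirstStep}(4). The peeling argument for $\eta_0\ge 2$ is also correct as far as it goes. But the induction never gets past $\eta=1$: the case $\eta_0=1$ is exactly where the single extra node $z\in (A')^*\setminus A'_0$ must be identified with the omitted node $a_0$, and you leave this as a plan rather than a proof. Since the inductive step for $\eta_0\ge 2$ relies on the case $\eta_0-1$, the entire induction above the base case is contingent on this unproven step, which is precisely the mathematical heart of the theorem and the one place where the paper does real work.

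Moreover, the plan you sketch for it does not work as stated. A pair $(\{x\},D)$ with $x\notin A'$ cannot be $\less{X}(A',d)$, because Condition E1 requires the first coordinate of the smaller node to be a proper subset of $A'$; and a pair such as $(a_0,e)$ with $e\ne d$ is not comparable to $(A',d)$ either, since E1 would force $\{e\}\supseteq\{d\}$. The paper's resolution is to first enlarge the node to $(A'\cup\{x\},d)$, where $x\notin A'$ is chosen (via Proposition \ref{specialt}) below $d$ and below one auxiliary maximal node $b_i >_X q_i$ for each $q_i\not<_X d$, and then to compare $(A'\cup\{x\},d)$ with the pairs $(\{x,q_i\},\{d,b_i\})$, whose $\vp$-images are pinned down by the last part of Corollary \ref{SecondStep} because $\{x,q_i\}<_X b_i$. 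This forces each $q_i$ into $(A'\cup\{x\})^*$, a counting argument using Lemma \ref{FirstStep}(2),(3) gives $(A'\cup\{x\})^*=A'\cup\{x\}$, and excluding $x$ from $A^*$ via Lemma \ref{L3} finishes. (Incidentally, this construction works uniformly for all $\eta\ge 1$, so the paper needs no induction on $\eta$ and no peeling step.) Until you carry out some version of this identification, the proof is incomplete at its essential point.
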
 
\begin{proof} We claim that it suffices to prove the theorem for all nodes in $\str X$ of the form $(A, b)$ where $\height_b(A, b) > 0.$ Given that assumption, if $(C, B) \in \str X, b \in B,$ and $(A,b) \in \mathcal N(C, b)$ from Definition \ref{NCD1}, then $\height_b(A, b) > 0$ because $(A, b) >_{\str X} (C, b).$ Thus $\vp(A, b) = (\rho(A), \rho(b)),$ and so by Corollary \ref{SecondStep}, we have $\vp(C, B) = (\rho(C), \rho(B)).$ 

Therefore, let $(A, b) \in \str X$ such that $\height_b(A, b) > 0.$ If $\mub_X A = \{b\},$ then the result holds by part 4 of Lemma \ref{FirstStep}. Since $\height_b(A, b) > 0,$ we have $\{b\} = \mub_X A$ if and only if $\eta(A, b) = 0.$ Thus, assume $\eta(A, b) \ne 0,$ and enumerate the elements $q \in A$ such that $q \not < b$ as $q_1, \ldots, q_{\eta(A,b)}.$ For each $i = 1, \ldots, \eta(A, b),$ let $b_i \in X_2$ be a node of $X$ such that $q_i <_X b_i,$ and set $B_i := \{b, b_i\}.$ Choose, by Proposition \ref{specialt}, $x \in X_1$ such that $x <_X \cup_{i=1}^{\eta(A, b)} B_i$ and $x \notin A.$ Define $Q_i:=\{x, q_i\}$ for each $i = 1, \ldots, \eta(A, b).$ 

Recall that for a pair $(A, B),$ we define $A^L = \{a \in A: a <_X B\}.$ Write $A = A^L \cup \{q_1, \ldots, q_{\eta(A, b)}\}.$ Note that $\{b\} = \mub_X A^L,$ and thus \begin{equation}\label{MTeq1} (A, b) \less{X} (A \cup\{x\}, b)\end{equation} and \begin{equation}\label{MTeq2} (Q_i, B_i) \less{X} (A  \cup\{x\}, b)\end{equation} for all $1 \le i \le \eta(A, b).$ 

Let $$\vp(A, b) = (\rho(A^*), \rho(b))$$ and $$\vp(A\cup\{x\}, b) =(\rho((A \cup \{x\})^*), \rho(b)).$$  

We claim $A^* = A.$ In order to prove this, we first show $(A \cup \{x\})^* = A \cup \{x\}$ and conclude that $A = A^*.$

By Lemma \ref{FirstStep}, we have $$\eta(A, b) = \eta(A^*, b) \text{ and } \ell(A, b) = \ell(A^*, b),$$ so $|A| = |A^*|.$ By, our choice of $x \in X_1 \setminus A,$ we have $$\eta(A, b) = \eta((A \cup \{x\})^*, b)\text{ and }\ell((A \cup\{x\})^*, b) = 1 + \ell(A, b),$$ so $|(A \cup \{x\})^*| = 1 + |A|.$ Moreover, by (\ref{MTeq1}), $\rho(A^*) \subset \rho((A \cup \{x\})^*)$ so $A^* \subset (A \cup \{x\})^*.$ 

Since $A^L <_X b$ and $(A^L, b) \less {X} (A \cup\{x\}, b)$ via $A^L,$ Corollary \ref{SecondStep} implies $\vp(A^L, b) = (\rho(A^L), \rho(b)),$ so $$(\rho(A^L), \rho(b)) \less{Y} (\rho((A \cup\{x\})^*), \rho(b)),$$ and thus $A^L \subset (A \cup \{x\})^*.$ 

Fix $1 \le i \le \eta(A, b).$ By the choice of $B_i$ and Corollary \ref{SecondStep}, we have $$\vp(Q_i, B_i) = (\rho(Q_i), \rho(B_i)).$$ After applying $\vp$ to (\ref{MTeq2}) and using the fact that $\rho$ is an isomorphism, we see $Q_i \subset (A \cup \{x\})^*.$ Since $i$ was arbitrary, we have $\cup_{i=1}^{\eta(A, b)} Q_i \subset (A \cup \{x\})^*,$ and by choice of each $Q_i,$ it follows that $$|\cup_{i=1}^{\eta(A, b)} Q_i| = \eta(A, b) + 1.$$

Putting all of this together, we see that we have accounted for $1 + |A|$ elements in $(A \cup \{x\})^*:$ $\ell(A, b)$ elements from $A^L$ and $\eta(A, b)+1$ elements from $\cup_{i=1}^{\eta(A, b)} Q_i.$ Therefore, $$(A\cup\{x\})^* = A^L \cup \{q_1, \ldots, q_{\eta(A, b)}\} \cup \{x\} = A \cup \{x\}.$$

Since $A^* \subset A \cup\{x\},$ it suffices to show $x \notin A^*.$ If $x \in A^*,$ then $\rho(x) \in \rho(A^*),$ so $$(\rho(x), \Greaters{X}{\rho(x)}) = \vp(x, \Greaters{X}{x}) \less{Y} \vp(A, b) = (\rho(A^*), \rho(b)).$$ Since $\vp$ is an isomorphism, we have $$(x, \Greaters{X}{x}) \less{X} (A, b),$$ so $x \in A,$ a contradiction. Therefore, $A^* \subset A,$ and since both $A, A^*$ are finite sets with $|A| = |A^*|,$ we have $A = A^*.$ \end{proof} 

The previous theorem, combined with Proposition \ref{invariant}, immediately gives us:

\begin{thm}\label{maintheorem} If $X$ and $Y$ are $J$-posets, then $X \cong Y$ if and only if $\str X \cong \str Y.$ Specifically, if $\rho: X\to Y$ is an isomorphism, then the map $$\vp: \str X \to \str Y$$ given by $\vp(A, B) = (\rho(A), \rho(B))$ for all $(A, B) \in \str X$ is an isomorphism, and conversely, if $\vp: \str X \to \str Y$ is any isomorphism, then there is an isomorphism $\rho: X \to Y$ such that $$\vp(A, B) = (\rho(A), \rho(B))$$ for all $(A, B) \in \str X.$ \end{thm}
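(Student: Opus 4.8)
The plan is to prove the forward direction via Proposition~\ref{invariant} (which is already done: an isomorphism $\rho: X \to Y$ induces $\varphi(A,B) = (\rho(A),\rho(B))$, an isomorphism of structure posets), and then to prove the converse, which is the substantive content. For the converse, suppose $\varphi: \str X \to \str Y$ is an arbitrary isomorphism. First I would reconstruct the ``points of the surface'': using Lemmas~\ref{L1} and~\ref{L2}, show that $\varphi$ carries each fiber $(\str X)_m$ ($m \in X_2$) isomorphically onto a fiber $(\str Y)_n$, and that the assignment $m \mapsto n$ is a well-defined bijection $\rho_2: X_2 \to Y_2$ (well-defined because distinct fibers are disjoint, injective by Lemma~\ref{L1}, surjective by applying Lemma~\ref{L2} to $\varphi^{-1}$). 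Next, reconstruct the ``curves'': by Lemma~\ref{AB}(i) the height-zero nodes of $\str X$ are exactly the pairs $(x, \Greaters{X}{x})$ with $x \in X_1$, and Lemma~\ref{L3} shows $\varphi$ sends each such node to $(s, \Greaters{Y}{s})$ for a unique $s \in Y_1$, giving a bijection $\rho_1: X_1 \to Y_1$; since $X_0 = \{x_0\}$ and $Y_0 = \{y_0\}$ are singletons, set $\rho_0(x_0) = y_0$. Assemble $\rho = \rho_0 \cup \rho_1 \cup \rho_2$, which is height-preserving and bijective on each level.

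The first main step is then Theorem~\ref{rho}: $\rho$ is a poset isomorphism, not merely a level-wise bijection. Since the only nontrivial comparabilities in a $J$-poset run from $X_0$ to $X_1 \cup X_2$ and from $X_1$ to $X_2$, and the $X_0$ case is automatic, it suffices to check that for $a \in X_1, b \in X_2$ one has $a <_X b \iff \rho(a) <_Y \rho(b)$. The key device is that $a <_X b$ is equivalent to the existence of a finite $K \subset X_1$ with $a \in K$ and $\mub_X K = \{b\}$ (Condition~J3 and $\dim X = 2$), hence equivalent to $(a, \Greaters{X}{a}) \less{X} (K, b)$ for some such $K$; pushing this comparability through $\varphi$, together with Lemma~\ref{L3} identifying the image of $(a,\Greaters{X}{a})$ and Lemma~\ref{L1} forcing the second coordinate of $\varphi(K,b)$ to be a singleton, yields $\rho(a) <_Y \rho(b)$, and the reverse implication follows by applying the same argument to $\varphi^{-1}$.

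The harder and longer step is Theorem~\ref{rhoAB}: for every $(A,B) \in \str X$, $\varphi(A,B) = (\rho(A), \rho(B))$. By Definition~\ref{Astar} we may write $\varphi(A,B) = (\rho(A^*), \rho(B^*))$ with $A^*, B^*$ uniquely determined; the goal is $A^* = A$, $B^* = B$. The plan is to bootstrap: Lemma~\ref{FirstStep}(1) already gives $B^* = B$ (by comparing with nodes $(K,b)$, $b \in B$, and using that the image has singleton second coordinate) and shows $\varphi$ restricts to an isomorphism of fibers $(\str X)_B \to (\str Y)_{\rho(B)}$; Lemma~\ref{FirstStep}(2)–(3) transport $\ell$ and $\eta$ across $\varphi$ using the cardinality formula in Lemma~\ref{ABB} and unique factorization; and Lemma~\ref{FirstStep}(4) settles the case $B = \mub_X A$ outright. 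Corollary~\ref{SecondStep} then reduces the general case to nodes of the form $(A,b)$ with $\height_b(A,b) > 0$ — via the auxiliary sets $\mathcal N(C,d)$ of ``$\eta$-preserving extensions'', which one can always enlarge by a set $F$ disjoint from both $C$ and $C^*$ with $\mub_X F = \{d\}$ to pin down $C^* = C$. Finally, for a single node $(A,b)$ with $\height_b(A,b) > 0$ and $\eta(A,b) \neq 0$, the strategy is to write $A = A^L \sqcup \{q_1,\dots,q_{\eta(A,b)}\}$ (where $A^L = \{a \in A : a <_X b\}$, so $\mub_X A^L = \{b\}$), introduce a fresh $x \in X_1 \setminus A$ lying below chosen maximal ideals $b_i > q_i$, and use the comparabilities $(A^L, b), (\{x,q_i\}, \{b,b_i\}) \less{X} (A \cup \{x\}, b)$ — each of whose images is already known by earlier cases — to force $(A \cup \{x\})^* = A \cup \{x\}$, whence $A^* \subseteq A \cup \{x\}$, and then eliminate $x \in A^*$ by the fact that $(x, \Greaters{X}{x}) \less{Y}$-image would force $x \in A$. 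I expect this last node-by-node argument — carefully accounting that exactly $|A|+1$ elements of $(A \cup \{x\})^*$ have been located — to be the main obstacle; the level-wise reconstruction of $\rho$ is comparatively routine. The theorem then follows by combining Theorem~\ref{rhoAB} with Proposition~\ref{invariant}.
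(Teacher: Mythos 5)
Your proposal is correct and follows essentially the same route as the paper: Proposition~\ref{invariant} for the forward direction, Lemmas~\ref{L1}--\ref{L3} to build the level-wise bijections $\rho_2$, $\rho_1$, $\rho_0$, Theorem~\ref{rho} to verify $\rho$ is an isomorphism via the characterization $a <_X b \iff (a,\Greaters{X}{a}) \less{X} (K,b)$ for some $K$ with $\mub_X K = \{b\}$, and then the bootstrap through Lemma~\ref{FirstStep}, Corollary~\ref{SecondStep}, and the node-by-node argument of Theorem~\ref{rhoAB} with the auxiliary sets $A^L$, $Q_i$, $B_i$ and the fresh element $x$. No gaps; this is the paper's proof.
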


\begin{mydef} Define the following subposet of $\str X:$ $$\strXF := \{(A, B) \in \str X: A, B \text{ are finite}\}.$$ If $B \subset X_2$ is finite, define $(\strXF)_B := (\str X)_B.$ \end{mydef}

As a corollary to Theorem \ref{rhoAB}, we show that an isomorphism from $\psi: \strXF \to \strYF$ induces an isomorphism $\rho: X \to Y$ such that $\psi(A, B) = (\rho(A), \rho(B))$ for all $(A, B) \in \strXF.$ For the rest of the paper, let $\psi: \strXF \to \strYF$ be an isomorphism of posets. By Theorem \ref{rho} and \ref{rhoAB}, it suffices to find an isomorphism $\vp: \str X \to \str Y$ such that $\vp$ restricts to $\psi$ on $\strXF.$ Note that the proofs of Lemmas \ref{ABB}, \ref{L1}, \ref{L2} apply to $\psi$ and $\strXF.$ Also, by Remark \ref{Item1comment}, the proof of Item 1 of Lemma \ref{FirstStep} applies to $\psi$ and $\strXF$ as well. 

\begin{lem} Let $X, Y$ be $J$-posets, let $\psi: \strXF \to \strYF$ be an isomorphism of posets, and let $x \in X_1.$ Let $b_1, b_2, b_3, \ldots$ be an enumeration of $\Greaters{X}{x},$ and, for each $i \ge 1,$ let $\mathcal K_i(x)$ be all $(K, b_i) \in \strXF$ such that $x \in K$ and $\mub_X K = \{b_i\}.$ Let $\mathcal K_{\strXF}(x) = \cup_{i=1} \mathcal K_i(x).$ Then $$\psi(\mathcal K_{\strXF}(x)) = \mathcal K_{\strYF}(y)$$ for some unique $y \in Y_1.$ \end{lem}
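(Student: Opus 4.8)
The plan is to pin down the second-coordinate behaviour of $\psi$, to locate $y$ by a finite-intersection argument spread across the infinitely many fibres $(\strXF)_{b_i}$, and then to upgrade the resulting inclusion to an equality by a symmetry argument.

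Since the proofs of Lemmas \ref{L1} and \ref{L2}, and (via Remark \ref{Item1comment}) of Item 1 of Lemma \ref{FirstStep}, apply verbatim to $\psi$, there is a bijection $\rho_2\colon X_2\to Y_2$ with $\psi((\strXF)_m)=(\strYF)_{\rho_2(m)}$ for every $m$ and $\psi(A,B)=(S,\rho_2(B))$ for all $(A,B)\in\strXF$. Write $n_i:=\rho_2(b_i)$. I first record that every element of $\psi(\mathcal K_i(x))$ has the form $(S,n_i)$ with $\mub_Y S=\{n_i\}$: if $(A,b_i)\in\mathcal K_i(x)$ then $|A|\ge 2$ and $\GB{X}(A)=\mub_X A=\{b_i\}$, so $(\{a\},b_i)\less{X}(A,b_i)$ via $W=A$ for each $a\in A$ and hence $\height_{b_i}(A,b_i)>0$; Lemma \ref{ABB} then gives $|\LB{b_i}(A,b_i)|$ odd, and since $\psi$ restricts to an isomorphism $(\strXF)_{b_i}\to(\strYF)_{n_i}$ the image $(S,n_i):=\psi(A,b_i)$ satisfies $\height_{n_i}(S,n_i)>0$ and $|\LB{n_i}(S,n_i)|$ odd, so $\mub_Y S=\{n_i\}$ by Lemma \ref{ABB} again. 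I also record that $\mathcal K_i(x)\ne\emptyset$: by Condition J3 pick a finite $K_0$ with $\mub_X K_0=\{b_i\}$; since $x<_X b_i$ we get $\mub_X(K_0\cup\{x\})=\{b_i\}$, so $(K_0\cup\{x\},b_i)\in\mathcal K_i(x)$, and in particular $\mathcal K_{\strXF}(x)\ne\emptyset$.

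The crux is the observation that, for any finite set of indices $\{i_1,\dots,i_r\}$, the pair $(\{x\},\{b_{i_1},\dots,b_{i_r}\})$ lies in $\strXF$ — precisely because $x<_X b_{i_\ell}$ for every $\ell$ — and lies below, in $\strXF$, \emph{every} member of $\mathcal K_{i_1}(x)\cup\dots\cup\mathcal K_{i_r}(x)$: for $(A,b_{i_\ell})\in\mathcal K_{i_\ell}(x)$ a check of E1--E3, using $\GB{X}(A)=\mub_X A=\{b_{i_\ell}\}$ and $|A|\ge 2$, shows that $(A,b_{i_\ell})$ dominates $(\{x\},\{b_{i_1},\dots,b_{i_r}\})$ via $W=A$. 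Applying $\psi$, if $\psi(\{x\},\{b_{i_1},\dots,b_{i_r}\})=(S_F,\{n_{i_1},\dots,n_{i_r}\})$ and $(S,n_{i_\ell})\in\psi(\mathcal K_{i_\ell}(x))$, then domination gives $S_F\subseteq S$. Hence, putting $\Theta_i:=\bigcap\{\,S:(S,n_i)\in\psi(\mathcal K_i(x))\,\}$, we get $\emptyset\ne S_F\subseteq\Theta_{i_1}\cap\dots\cap\Theta_{i_r}$. Each $\Theta_i$ is a nonempty \emph{finite} set — it is contained in the first coordinate of any one member of $\psi(\mathcal K_i(x))$, which is finite since elements of $\strYF$ have finite first coordinate — so the family $\{\Theta_i\}_{i\ge 1}$ of finite sets has the finite intersection property and therefore $\bigcap_{i\ge 1}\Theta_i\ne\emptyset$. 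Picking $y$ in this intersection: any $(A,b)\in\mathcal K_{\strXF}(x)$ has $b=b_i$ for some $i$ (as $x\in A$ forces $x<_X b$), and $\psi(A,b)=(S,n_i)$ with $y\in\Theta_i\subseteq S$, $\mub_Y S=\{n_i\}$, and $y<_Y n_i$; since $\mathcal K_{\strYF}(y)$ is exactly the set of $(S,n)\in\strYF$ with $y\in S$ and $\mub_Y S=\{n\}$, this yields $\psi(\mathcal K_{\strXF}(x))\subseteq\mathcal K_{\strYF}(y)$.

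Finally, the entire argument applies equally to $\psi^{-1}\colon\strYF\to\strXF$ (with the roles of $X$ and $Y$ interchanged), producing for this $y$ some $x'\in X_1$ with $\psi^{-1}(\mathcal K_{\strYF}(y))\subseteq\mathcal K_{\strXF}(x')$; composing the two inclusions gives $\mathcal K_{\strXF}(x)\subseteq\mathcal K_{\strXF}(x')$. But $\mathcal K_{\strXF}(x)\subseteq\mathcal K_{\strXF}(x')$ forces $x'=x$: if $x'\ne x$, choose $b\in\Greaters{X}{x}$ and, by Condition J3 with $F=\{x'\}$, a finite $K$ with $x'\notin K$ and $\mub_X K=\{b\}$; then $\mub_X(K\cup\{x\})=\{b\}$ shows $(K\cup\{x\},b)\in\mathcal K_{\strXF}(x)\setminus\mathcal K_{\strXF}(x')$, a contradiction. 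Hence both inclusions are equalities and $\psi(\mathcal K_{\strXF}(x))=\mathcal K_{\strYF}(y)$, and the same avoidance trick in $Y$ gives uniqueness of $y$. The step that deserves care is the finite-intersection argument: it genuinely relies on each $\Theta_i$ being finite, and that in turn is where the finiteness of first coordinates in $\strXF$, together with the lower bounds $(\{x\},\{b_{i_1},\dots,b_{i_r}\})$, is indispensable.
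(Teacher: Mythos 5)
Your proof is correct and follows essentially the same route as the paper's: you anchor the fibres $\psi(\mathcal K_i(x))$ from below by the images of the nodes $(\{x\},\{b_{i_1},\dots,b_{i_r}\})$, extract $y$ from the resulting nested intersection of nonempty finite sets, and then upgrade the inclusion $\psi(\mathcal K_{\strXF}(x))\subseteq \mathcal K_{\strYF}(y)$ to an equality by applying the argument to $\psi^{-1}$. The only cosmetic differences are that you verify $\mub_Y S=\{n_i\}$ via the odd-cardinality criterion of Lemma \ref{ABB} rather than via preservation of $\eta$, and you show $\mathcal K_{\strXF}(x)\subseteq\mathcal K_{\strXF}(x')$ forces $x=x'$ by a J3-avoidance witness rather than by the paper's comparison of the infinite set $\Greaters{X}{x}$ with the finite set $\mub_X\{x,x'\}$.
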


\begin{proof} If $Z$ is any $J$-poset, and $z, w \in Z_1$ are distinct elements such that $\mathcal K_{\str_FZ}(z) \subseteq \mathcal K_{\str_ZF}(w),$ then $z$ and $w$ are both less than every element $t \in Z_2$ such that $z <_Z t.$ Since $z, w$ are distinct, we have $\Greaters{Z}{z} \subset \mub_Z\{z, w\},$ a contradiction because $\mub_Z\{z, w\}$ is finite. The uniqueness part of the lemma follows, and it suffices to show that $\psi (\mathcal K_{\strXF}(x)) \subseteq \mathcal K_{\strYF}(y)$ for some $y$ because we may apply the same argument to $\psi^{-1}:\strYF \to \strXF$ to show that $$\mathcal K_{\strXF}(x) \subseteq \psi^{-1}(\mathcal K_{\strYF}(y)) \subseteq \mathcal K_{\strXF}(x')$$ which implies $x = x'$ so $\psi(\mathcal K_{\strXF}(x)) = \mathcal K_{\strYF}(y).$

With the enumeration of $\Greaters{X}{x}$ as $b_1, b_2, \ldots,$ let $B_i := \{b_1, \ldots, b_i\}$ for each integer $i \ge 1.$ By Item 1 of Lemma \ref{FirstStep}, we may write, for each integer $j \ge 1,$ $\psi(x, B_j) = (F_j, \rho_2(B_j))$ for some finite $F_j \subset Y_1$ (see Remark \ref{Item1comment}). Now, $$(F_j, \rho_2(B_j)) <_{\strYF} (J, \rho_2(b_i))$$ for all $i \le j$ such that $(J, \rho_2(b_i)) \in \psi(\mathcal K_i(x)).$ Now,  $$F_j \subseteq \bigcap_{i=1}^j \bigcap_{(J, \rho_2(b_i)) \in \psi(\mathcal K_i(x))} J:=W_j,$$ so each $W_j$ is nonempty. Moreover, $W_j \supseteq W_r$ for all $r \ge j.$ Since each $W_j$ is finite, there exists $j_0$ such that $W_{j_0} = W_r$ for all $r \ge j_0.$ In particular, $$\bigcap_{(J, \rho_2(b_i)) \in \cup_{i=1}^{\infty} \psi(\mathcal K_i(x))} J = \bigcap_{j=1}^{\infty} W_j$$ is nonempty. Let $y \in \cap_{j=1}^{\infty} W_j.$ If $(K, b_i) \in \mathcal K_i(x),$ then $\height_X(K, b_i) > 0$ and $\eta(K, b_i) = 0,$ so $\height \psi(K, b_i) > 0$ and $\eta(\psi(K, b_i)) = 0,$ and therefore $\mub_Y J = \{\rho_2(b_i)\}$ where $(J, \rho_2(b_i)) = \psi(K, b_i).$ Since $y \in \cap_{j=1}^{\infty} W_j,$ we have $y \in J,$ so $\psi(K, b_i) \in \mathcal K_{\strYF}(y).$ Thus, $\psi(\mathcal K_{\strXF}(x)) \subseteq \mathcal K_{\strYF}(y).$ \end{proof}

\begin{thm}\label{finiterhoAB} If $X, Y$ are $J$-posets, then $\strXF \cong \strYF$ if and only if $X \cong Y.$ Additionally, if $\psi: \strXF \to \strYF$ is an isomorphism, then there is an isomorphism $\rho: X\to Y$ such that $$\psi(A, B) = (\rho(A), \rho(B))$$ for all $(A, B) \in \str X.$  \end{thm}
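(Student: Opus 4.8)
The plan is to extend $\psi$ to an isomorphism $\vp\colon\str X\to\str Y$ with $\vp|_{\strXF}=\psi$; by Theorems \ref{rho} and \ref{rhoAB} such a $\vp$ produces an isomorphism $\rho\colon X\to Y$ with $\vp(A,B)=(\rho(A),\rho(B))$ for all $(A,B)\in\str X$, and restricting that identity to $\strXF$ gives the asserted formula. (The implication $X\cong Y\Rightarrow\strXF\cong\strYF$ is immediate from Proposition \ref{invariant}, since the isomorphism $\str X\to\str Y$ it produces carries $\strXF$ bijectively onto $\strYF$.) Because $\Greaters{X}{x}$ is infinite for each $x\in X_1$ (Condition J2), the definition of $\str X$ forces $\str X\setminus\strXF=\{(x,\Greaters{X}{x}):x\in X_1\}$, and similarly for $Y$, so the only choice is where each $(x,\Greaters{X}{x})$ is sent. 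The lemma immediately above provides a bijection $\rho_1\colon X_1\to Y_1$ determined by $\psi(\mathcal K_{\strXF}(x))=\mathcal K_{\strYF}(\rho_1(x))$: it is well-defined and injective by the uniqueness and the containment statement in that lemma's proof, surjective by applying the lemma to $\psi^{-1}$, and the analogous map attached to $\psi^{-1}$ is $\rho_1^{-1}$. Set $\vp=\psi$ on $\strXF$ and $\vp(x,\Greaters{X}{x})=(\rho_1(x),\Greaters{Y}{\rho_1(x)})$; this is a bijection $\str X\to\str Y$.

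To see $\vp$ is an isomorphism it remains to compare the order relations involving the new nodes. The nodes $(x,\Greaters{X}{x})$ are pairwise incomparable (E1 would demand a proper containment of one-element sets), so are their $\vp$-images, and no node of $\strXF$ lies below any $(x,\Greaters{X}{x})$ (and none of $\strYF$ below any $(s,\Greaters{Y}{s})$), again by E1. Hence everything reduces to the biconditional $(x,\Greaters{X}{x})\les{X}(A,B)\iff(\rho_1(x),\Greaters{Y}{\rho_1(x)})\les{Y}\psi(A,B)$ for $(A,B)\in\strXF$. Call $(K,B)\in(\strXF)_B$ a \emph{full-join node} if $\mub_X K=B$; by Lemma \ref{ABB} (valid verbatim for $\strXF$) this is equivalent to $\height_B(K,B)>0$ together with $|\LB{B}(K,B)|$ odd, an order-theoretic condition on $(\strXF)_B$. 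Unwinding E1--E3 shows that, for $(A,B)\in\strXF$, $(x,\Greaters{X}{x})\les{X}(A,B)$ holds if and only if some full-join node $(K,B)$ satisfies $x\in K$ and $(K,B)\les{X}(A,B)$ (from a domination of $(x,\Greaters{X}{x})$ by $(A,B)$ via $W$ take $K=\{x\}\cup W$; conversely such a $(K,B)$ lets $(A,B)$ dominate $(x,\Greaters{X}{x})$ via $K$). Since $\psi$ restricts to an isomorphism $(\strXF)_B\to(\strYF)_{\rho_2(B)}$ — by Lemmas \ref{L1}, \ref{L2} and (the proof of) Item 1 of Lemma \ref{FirstStep}, via Remark \ref{Item1comment}, applied to $\psi$, with $\rho_2\colon X_2\to Y_2$ the induced bijection — the biconditional will follow from the claim that $\psi(K,B)=(\rho_1(K),\rho_2(B))$ for every full-join node $(K,B)$: then $x\in K\iff\rho_1(x)\in\rho_1(K)$, and the full-join nodes of $(\strYF)_{\rho_2(B)}$ are precisely the pairs $(\rho_1(K),\rho_2(B))$.

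The main obstacle is this last claim. First treat nodes $(G,g)$ with $\mub_X G=\{g\}$ (so $g\in X_2$): such a node belongs to $\mathcal K_{\strXF}(w)$ precisely for $w\in G$, and $\psi(G,g)$ is again of this shape in $Y$ (Lemma \ref{ABB} transfers the oddness of its $\LB{}$-set), so its first coordinate equals $\{z:\psi(G,g)\in\mathcal K_{\strYF}(z)\}=\rho_1(G)$; that is, $\psi(G,g)=(\rho_1(G),\{\rho_2(g)\})$. For a general full-join node $(K,B)$ write $\psi(K,B)=(K_0,\rho_2(B))$, pick $b\in B$, and use Condition J3 to pick a finite $F\subseteq X_1$ disjoint from $K$ with $\mub_X F=\{b\}$; then $\mub_X(K\cup F)=\{b\}$ and $(K,B)\less{X}(K\cup F,b)$ via $F$, so applying $\psi$ and the previous case gives $K_0\subseteq\rho_1(K\cup F)=\rho_1(K)\cup\rho_1(F)$. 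Repeating with a second set $F'$ disjoint from $K\cup F$ (Condition J3 again) gives $K_0\subseteq\rho_1(K)\cup\rho_1(F')$, and since $\rho_1(F)\cap\rho_1(F')=\rho_1(F\cap F')=\emptyset$ we conclude $K_0\subseteq\rho_1(K)$. The same argument applied to $\psi^{-1}$ (whose associated maps are $\rho_1^{-1},\rho_2^{-1}$, and for which $(K_0,\rho_2(B))$ is itself a full-join node by Lemma \ref{ABB}) yields $\rho_1(K)\subseteq K_0$, so $K_0=\rho_1(K)$. This completes the verification that $\vp$ is an isomorphism, and the theorem follows.
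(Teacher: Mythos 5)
Your proposal is correct, and its skeleton matches the paper's: both define $\vp$ on $\strXF$ as $\psi$ and on the remaining nodes by $(x,\Greaters{X}{x})\mapsto(\rho_1(x),\Greaters{Y}{\rho_1(x)})$ using the preceding lemma, then invoke Theorems \ref{rho} and \ref{rhoAB}. Where you diverge is in verifying that $\vp$ preserves the relations $(x,\Greaters{X}{x})\les{X}(A,B)$. The paper argues directly on a given comparable pair: it shows $\rho_2(B_2)\subset\Greaters{Y}{c}$ and then extracts $c\in C$ by exhibiting $(K,B_2)$ as the \emph{greatest} element of $(\strXF)_{B_2}$ below two nodes $(K\cup K_1,b)$ and $(K\cup K_2,b)$ of $\mathcal K_{\strXF}(a)$, a characterization $\psi$ must preserve. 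You instead prove the stronger intermediate statement that $\psi(K,B)=(\rho_1(K),\rho_2(B))$ for every node with $\mub_X K=B$, first reading off the singleton case $\mub_X G=\{g\}$ from the identity $G=\{w:(G,g)\in\mathcal K_{\strXF}(w)\}$ together with $\psi(\mathcal K_{\strXF}(w))=\mathcal K_{\strYF}(\rho_1(w))$, and then handling general $B$ by intersecting the two containments $K_0\subseteq\rho_1(K)\cup\rho_1(F)$ and $K_0\subseteq\rho_1(K)\cup\rho_1(F')$ obtained from disjoint auxiliary sets supplied by Condition J3, with the reverse inclusion coming from symmetry under $\psi^{-1}$. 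Your order-theoretic rephrasing of $(x,\Greaters{X}{x})\les{X}(A,B)$ in terms of ``full-join'' subnodes $(K,B)$ containing $x$ then makes the transfer of the relation a formality. The two disjoint-set tricks are cousins, but your version buys a cleaner, symmetric argument and a reusable partial form of Theorem \ref{rhoAB} established directly from $\psi$, at the cost of a slightly longer setup; the paper's version is more economical but proves only what is needed for the single comparison at hand. I checked the delicate points of your argument --- that $W\ne\{x\}$ in the unwinding of E1--E3 (forced because $B$ is finite), that $\mub_X(K\cup F)=\{b\}$, and that $(K_0,\rho_2(B))$ is again a full-join node so the $\psi^{-1}$ step applies --- and they all go through.
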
 

\begin{proof} If $\rho: X \to Y$ is an isomorphism, then $\psi: \strXF \to \strYF$ given by $\psi(A, B) = (\rho(A), \rho(B))$ is an isomorphism. To get the other direction, it suffices to define a poset isomorphism $\vp: \str X \to \str Y$ that extends $\psi.$ 

If $(A, B) \in \strXF,$ define $\vp(A, B) = \psi(A, B).$ Otherwise, $(A, B) = (a, \Greaters{X}{a})$ for some $a \in X_1.$ Then $\psi(\mathcal K_{\strXF}(a)) = \mathcal K_{\strYF}(c)$ for some unique $c \in Y_1$ by the previous lemma. Define $\vp(a, \Greaters{X}{a}) = (c, \Greaters{X}{c}).$

If $(A_1, B_1) \les{X} (A_2, B_2),$ and $B_1$ is finite, then so is $B_2,$ so $\vp(A_1, B_1) \les{X} \vp(A_2, B_2)$ because $\vp$ extends $\psi.$  If $B_1$ is infinite and $B_2$ is finite, then write $\vp(A_1, B_1) = \vp(a, \Greaters{X}{a}) = (c, \Greaters{X}{c}) ,$ and write $\psi(A_2, B_2) = (C, D).$ Since $D = \mub_Y J$ for some $J \subseteq C,$ we have it suffices to show $D \subset \Greaters{Y}{c}$ and $c \in C$ to conclude $(c, \Greaters{X}{c}) \less{Y} (C, D).$ Having shown this, applying similar reasoning to $\psi^{-1}$ shows that $\vp^{-1}:\str Y \to \str X$ exists and is a poset map. Therefore, $\vp$ is an isomorphism, and by Theorem \ref{AB}, we get the desired result. 

If $b \in B_2,$ and $(K, b) \in \mathcal K_{\strXF}(a),$ then $\vp(K, b) = (J', \rho_2(b)) \in \mathcal K_{\strYF}(c).$ So $D = \rho_2(B_2) \subset \Greaters{Y}{c}.$ 

Let $K \subseteq A_2$ be a maximal subset of $A$ such that $\mub_X K = B_2.$ Note that $a \in K.$ Fix $b \in B_2,$ and let $K_1$ and $K_2$ be finite subsets of $X_1$ such that $K_1, K_2,$ and $K$ are pairwise disjoint and $$\mub_X K_1\cup K = \mub_X K_2 \cup K = \{b\}.$$ 

Now, $(K, B_2) <_{\strXF} (K_i \cup K, b)$ for $i = 1, 2,$ and if $(K', B_2) <_{\strXF} (K_i \cup K,b)$ for $i = 1, 2$ for some $K' \subset X_1,$ then $$K' \subseteq (K_1 \cup K) \cap (K_2 \cup K),$$ so $(K', B_2) \le_{\strXF} (K, B_2)$ because of how we chose $K_1, K_2$ and $K.$ In other words, $(K, B_2)$ is the greatest element of $(\strXF)_{B_2}$ that is less than both $(K\cup K_1, b)$ and $(K \cup K_2, b).$  Let $(J_i, \rho_2(b)) = \vp(K\cup K_i, b),$ and let $(S, \rho_2(B_2)) = \vp(K, \rho_2(B_2)).$ Since $(K \cup K_i, b) \in \mathcal K_{\strXF} (a),$ it follows that $c \in J_1 \cap J_2$ by the previous lemma. Moreover, since $(K, B_2) \le_{\strXF} (A, B_2),$ we have $(S, B_2) \le_{\strYF} (C, \rho_2(B_2)).$ We claim $c \in S.$

Both $(J_1, \rho_2(b))$ and $(J_2, \rho_2(b))$ both exceed $(S, \rho_2(B_2))$ in $\strYF.$ Moreover, since $\vp$ restricts to an isomorphism from the subposet $(\strXF)_{B_2}\cup(\strXF)_b$ onto $(\strYF)_{\rho_2(B_2)}\cup(\strYF)_{\rho_2(b)},$ the node $(S, \rho_2(B_2))$ is the greatest element of $(\strYF)_{\rho_2(B_2)}$ that is less than both $(J_1, \rho_2(b))$ and $(J_2, \rho_2(b)).$ Since $c \in J_1 \cap J_2,$ and $c<_Y \rho_2(B_2)$ by the previous paragraph, we have $(c, \rho_2(B_2)) <_{\strYF} (J_i, \rho_2(b)).$ So $(c, \rho_2(B_2)) <_{\strYF} (S, \rho_2(B_2)),$ and therefore $c\in S \subseteq C.$  \end{proof} 

\section{Acknowledgements} The author would like to thank his mentor, collaborator, and ``big sister" S. Loepp for many insightful comments and for greatly helping to improve the quality of the paper. The author also wishes to thank Washington \& Lee University for their support via the Lenfest grant.

\bibliographystyle{plain}  
\bibliography{DissertationBib} 
\index{Bibliography@\emph{Bibliography}}

\end{document}